\Crefname{algocf}{Algorithm}{Algorithms}
\newenvironment{customlegend}[1][]{%
	\begingroup
	\csname pgfplots@init@cleared@structures\endcsname
	\pgfplotsset{#1}%
}{%
	\csname pgfplots@createlegend\endcsname
	\endgroup
}%
\def\addlegendimage{\csname pgfplots@addlegendimage\endcsname}
\definecolor{sixclassRdYlBu1}{rgb}{0.84,0.19,0.15}
\definecolor{sixclassRdYlBu2}{rgb}{0.99,0.55,0.35}
\definecolor{sixclassRdYlBu3}{rgb}{1.0,0.88,0.56}
\definecolor{sixclassRdYlBu4}{rgb}{0.88,0.95,0.97}
\definecolor{sixclassRdYlBu5}{rgb}{0.57,0.75,0.86}
\definecolor{sixclassRdYlBu6}{rgb}{0.27,0.46,0.71}
\theoremstyle{plain}
\newtheorem{lemma}{Lemma}[section]
\newtheorem{theorem}[lemma]{Theorem}
\newtheorem{remark}[lemma]{Remark}
\newtheorem{proposition}[lemma]{Proposition}
\newtheorem{assumption}{Assumption}
\newcommand{\Params}{\mathcal{P}}
\newcommand{\Paramsad}{\mathcal{P}} %\mathcal{P}_{ad}
\newcommand{\J}{\mathcal{J}}
\newcommand{\LL}{\mathcal{L}}
\newcommand{\Jhat}{\hat{\mathcal{J}}}
\newcommand{\R}{\mathbb{R}}
\newcommand{\N}{\mathbb{N}}
\newcommand{\pr}{\textnormal{pr}}
\newcommand{\du}{\textnormal{du}}
\newcommand{\red}{r}
\newcommand{\Jnoncor}{\hat{J}}
\newcommand{\cJhatn}{{{\Jhat_\red}}}
\newcommand{\cont}[1]{\gamma_{#1}}
\newcommand{\infsup}[1]{\gamma^{\text{pg}}_{#1}}
\newcommand{\Proj}{\mathrm{P}}
\DeclareMathOperator{\integralend}{d}
\newcommand{\intend}[1]{\integralend \hS{-1.25} #1}
\newcommand{\dx}{\intend{x}}
\newcommand{\kformd}{k_{\mu}}
\DeclarePairedDelimiterX\skal[2]{(}{)}{#1\,,\,#2}
\newcommand{\gridh}{\mathcal{T}_h}
\newcommand{\Gridh}{\mathcal{T}_H}
\newcommand{\Ktmu}{\mathbb{K}_{T,\mu}}
\newcommand{\hS}[1]{\hspace{#1pt}}
\DeclareMathOperator*{\argmin}{argmin}
\DeclareMathOperator{\esssup}{ess \, sup}
\DeclareMathOperator{\essinf}{ess \, inf}
\newcommand{\el}{l}
\newcommand{\kl}{\ell}
\newcommand{\uhm}[1][\mu]{u_{h,#1}}
\newcommand{\phm}[1][\mu]{p_{h,#1}}
\newcommand{\uhkm}[1][\mu]{u_{H,\kl,#1}}
\newcommand{\phkm}[1][\mu]{p_{H,\kl,#1}}
\newcommand{\uhkmsm}{u_{H,\kl,\mu}^{\text{ms}}}
\newcommand{\phkmsm}{p_{H,\kl,\mu}^{\text{ms}}}
\newcommand{\uhkmsmrb}{u_{H,\kl,\mu}^{\text{ms,rb}}}
\newcommand{\phkmsmrb}{p_{H,\kl,\mu}^{\text{ms,rb}}}
\newcommand{\uhkmrb}{u_{H,\kl,\mu}^{\text{rb}}}
\newcommand{\phkmrb}{p_{H,\kl,\mu}^{\text{rb}}}
\newcommand{\ehms}{e^{\text{ms}}_{H,\kl}}
\newcommand{\eh}{e_{H,\kl}}
\newcommand{\vf}{v^{\text{f}}}
\newcommand{\vft}[1][T]{v^\text{f}_{#1}}
\newcommand{\uft}[1][T]{u^\text{f}_{#1}}
\newcommand{\ufti}[1]{u^\text{f}_{T_{#1}}}
\newcommand{\vfti}[1]{v^\text{f}_{T_{#1}}}
\newcommand{\IH}{\mathcal{I}_H}
\newcommand{\CPG}{\infsup{\kl}}
\newcommand{\QQ}{{\mathcal{Q}}}
\newcommand{\QQm}{{\QQ_{\mu}}}
\newcommand{\QQkm}{{\QQ_{\kl,\mu}}}
\newcommand{\QQkmrb}{{\QQ_{\kl,\mu}^\text{rb}}}
\NewDocumentCommand{\QQktm}{O{T} O{\mu}}{\QQ^{#1}_{\kl,#2}}
\NewDocumentCommand{\QQktmrb}{O{T} O{\mu}}{\QQ^{#1,rb}_{\kl,#2}}
\NewDocumentCommand{\QQktmts}{O{T} O{\mu}}{\QQ^{#1,TS}_{\kl,#2}}
\newcommand{\Vfh}{V^{\text{f}}_{h}}
\newcommand{\Vfhkt}[1][T]{V^{\text{f}}_{h,\kl,#1}}
\newcommand{\Vfrbkt}[1][T]{V^{\text{f},\text{rb}}_{\kl,#1}}
\newcommand{\Tlast}{T_{\abs{\mathcal{T}_H}}}
\newcommand{\Vts}{\mathfrak{V}}
\newcommand{\Vtsrbpr}{\mathfrak{V}^{\text{rb,pr}}}
\newcommand{\Vtsrb}{\mathfrak{V}^{\text{rb}}}
\newcommand{\Vtsrblod}{\mathfrak{V}^{\text{rblod}}}
\newcommand{\Vtsrbdu}{\mathfrak{V}^{\text{rb,du}}}
\newcommand{\pts}{\mathfrak{p}}
\newcommand{\uts}{\mathfrak{u}}
\newcommand{\utsm}{\mathfrak{u}_{\mu}}
\newcommand{\ptsm}{\mathfrak{p}_{\mu}}
\newcommand{\utsmrb}{\mathfrak{u}_{\mu}^{\text{rb}}}
\newcommand{\ptsmrb}{\mathfrak{p}_{\mu}^{\text{rb}}}
\newcommand{\vts}{\mathfrak{v}}
\newcommand{\Btsm}{\mathfrak{B}_{\mu}}
\newcommand{\Btsq}[1][q]{\mathfrak{B}_{\xi}}
\newcommand{\Fts}{\mathfrak{F}}
\newcommand{\sumT}{\sum_{T\in\Gridh}}
\DeclarePairedDelimiterX\mengenA[1]{\lbrace}{\rbrace}{#1}
\DeclarePairedDelimiterX\mengenB[2]{\lbrace}{\rbrace}{#1\, \delimsize\vert \, #2}
\newcommand{\set}[2][\relax]{
	\ifx#1\relax \ensuremath{
		\mengenA*{#2}}
	\else \ensuremath{%
		\mengenB*{#1}{#2}}
	\fi}
\DeclareRobustCommand{\minwidthbox}[2]{%
	\ifmmode
	\expandafter\mathmakebox
	\else
	\expandafter\makebox
	\fi
	[\ifdim#2<\width\width\else#2\fi]{#1}%
}
\DeclarePairedDelimiter{\abs}{|}{|}
\DeclarePairedDelimiter{\norm}{\lVert}{\rVert}
\DeclareFontFamily{U}{matha}{\hyphenchar\font45}
\DeclareFontShape{U}{matha}{m}{n}{
      <5> <6> <7> <8> <9> <10> gen * matha
      <10.95> matha10 <12> <14.4> <17.28> <20.74> <24.88> matha12
      }{}
\DeclareSymbolFont{matha}{U}{matha}{m}{n}
\DeclareFontFamily{U}{mathx}{\hyphenchar\font45}
\DeclareFontShape{U}{mathx}{m}{n}{
      <5> <6> <7> <8> <9> <10>
      <10.95> <12> <14.4> <17.28> <20.74> <24.88>
      mathx10
      }{}
\DeclareSymbolFont{mathx}{U}{mathx}{m}{n}
\DeclareMathDelimiter{\vvvert}{0}{matha}{"7E}{mathx}{"17}
\DeclarePairedDelimiterXPP{\snorm}[1]{}{\lVert}{\rVert}{_{1}}{\ifblank{#1}{\:\cdot\:}{#1}}
\DeclarePairedDelimiterXPP{\anorm}[1]{}{\lVert}{\rVert}{_{a,\mu}}{\ifblank{#1}{\:\cdot\:}{#1}}
\DeclarePairedDelimiterXPP{\Snorm}[1]{}{\vvvert}{\vvvert}{_{1}}{\ifblank{#1}{\:\cdot\:}{#1}}
\DeclarePairedDelimiterXPP{\Anorm}[1]{}{\vvvert}{\vvvert}{_{a,\mu}}{\ifblank{#1}{\:\cdot\:}{#1}}
\DeclarePairedDelimiterXPP{\SMnorm}[1]{}{\vvvert}{\vvvert}{_{1,\mu}}{\ifblank{#1}{\:\cdot\:}{#1}}
\newcommand{\labeltext}[2]{%
  \@bsphack
  \csname phantomsection\endcsname % in case hyperref is used
  \def\@currentlabel{#1}{\label{#2}}%
  \@esphack
}
\DeclareOldFontCommand{\rm}{\normalfont\rmfamily}{\mathrm}
\DeclareOldFontCommand{\sc}{\normalfont\scshape}{\@nomath\sc}
\title{A Relaxed Localized Trust-Region Reduced Basis Approach for Optimization of Multiscale Problems\footnote{{\textbf{Funding:}} The authors acknowledge funding by the Deutsche Forschungsgemeinschaft under Germany’s Excellence Strategy EXC 2044 390685587, Mathematics M\"unster: Dynamics -- Geometry -- Structure and by the DFG under contract OH 98/11-1.
}}
\author{Tim Keil$^\dagger$ and Mario Ohlberger\thanks{Mathematics Münster, Westfälische Wilhelms-Universität Münster, Einsteinstr. 62, D-48149 M\"unster, \url{tim.keil@uni-muenster.de, mario.ohlberger@uni-muenster.de}}}
\begin{document}
	
%%%% FOR ARXIV

\maketitle

\begin{abstract}
	In this contribution, we are concerned with parameter optimization problems that are constrained by multiscale PDE state equations. As an efficient numerical solution approach for such problems, we introduce and analyze a new relaxed and localized trust-region reduced basis method. Localization is obtained based on a Petrov-Galerkin localized orthogonal decomposition method and its recently introduced two-scale reduced basis approximation. We derive efficient localizable a posteriori error estimates for the optimality system, as well as for the two-scale reduced objective functional. 
While the relaxation of the outer trust-region optimization loop still allows for a rigorous convergence result, the resulting method converges much faster due to larger step sizes in the initial phase of the iterative algorithms. 
The resulting algorithm is parallelized in order to take advantage of the localization.
Numerical experiments are given for a multiscale thermal block benchmark problem. The experiments demonstrate the efficiency of the approach, particularly for large scale problems, where methods based on traditional finite element approximation schemes are prohibitive or fail entirely.  
	
	\par\vskip\baselineskip\noindent
	\textbf{Keywords}: PDE constrained optimization, relaxed trust-region method, localized orthogonal decomposition, two-scale reduced basis approximation, multiscale optimization problems
	\par\vskip\baselineskip\noindent
	\textbf{AMS Mathematics Subject Classification}: 49M20, 35J20, 65N15, 65N30, 90C06
\end{abstract}
%%%%%% END FOR ARXIV

%%%%%%%%%%%%%%%%%%%%%%%%%%%%%%%%%%%%%%%%%%%%%%%%%%%%%%%%%%%%%%%%%%%%%%%%%

\section{Introduction}
\label{sec:introduction} 
Parameterized multiscale problems where the parameters are optimized with respect to a user-defined quality criteria are of general interest in many physical, chemical, biomedical, or engineering applications.
Examples include the optimal design of devices built from composed materials \cite{Allaire2012,ADFM2016,MR3875293}, optimization of reactive flow processes in porous media \cite{Jansen201140,MR2143505,MR3969255}, or the design of meta-materials \cite{MR3529770,Barbara18,Ohlberger202029}.
As a mathematical model for such constrained parameter optimization problems, we consider linear-quadratic parameter optimization, subject to the solution of a parameter-dependent elliptic variational multiscale problem.

The numerical approximation of such problems is computationally extremely demanding due to the multiscale character of the problem and the need for repeated PDE-solves within an outer iterative optimization loop.
Recently, substantial progress has been made, both with respect to efficient algorithms for PDE-constrained optimization and with respect to efficient model order reduction approaches for variational multiscale problems. \\

\textbf{Numerical multiscale methods and localized model order reduction.}
In the last two decades, there has been a tremendous development of suitable numerical methods for multiscale problems. The main intention is to resolve the finest required scale only locally and collect the gathered fine-scale information in an effective coarse-scale global system.
Well-established methods include the multiscale finite element method (MsFEM) \cite{MsFEM,Efendiev2009,HOS2014}, its generalized variants (GMsFEM) \cite{Efendiev2013,CEL2014}, the heterogeneous multiscale method (HMM) \cite{EEnq,EE2005,Ohl2005}, the variational multiscale method (VMM) \cite{Hughes1995387,HUGHES19983,LM2005}, the multiscale-spectral generalized finite element method (GFEM) \cite{BL2011,SP2016,BL+2020,MS22,MSD22,SS22}, and the local orthogonal decomposition (LOD) \cite{Maalqvist2014,HMP2014,LODbook}. For a recent review on multiscale methods, we refer to \cite{AHP21}.

For parameterized PDEs, model order reduction (MOR) has seen great development in the last decade \cite{MR3672144}.
A particular instance is the reduced basis method (RBM)\cite{Hesthaven2016,Quarteroni2016}.
The main idea is a splitting into an offline- and online phase.
In the offline phase, a sufficiently rich reduced basis (RB) is constructed with the full order model (FOM) that computes high-fidelity solutions.
Subsequently, a projection-based surrogate model is built from the reduced basis.
In the online phase, the resulting reduced order model (ROM) is evaluated with preferably no need to touch the high-fidelity complexity at all.
In the context of parameter optimization for multiscale problems, model order reduction can be used to accelerate repeated solves of the respective multiscale method.

Meanwhile, several applications of the RBM in the context of multiscale methods have been proposed in~\cite{Efendiev2013,HesthavenZhangEtAl2015,Nguyen2008} for the MsFEM, in~\cite{AB12,AB13,AB14_2,AB14,A19} for the HMM, in~\cite{RBLOD,tsrblod} for the LOD and in \cite{Boy2008,OS12,OS2014,SO15} for other related approaches.
In these works, the RBM has been used along with a parameterization of the local problems for speeding up the solution process of a single multiscale problem, whereas in~\cite{RBLOD, Nguyen2008,OS12,OS2014,SO15,tsrblod}, a ROM is built for each individual local problem in the context of parameterized multiscale problems.
For an overview of localized MOR and applications to parameterized multiscale problems, we refer to the review article \cite{BIORSS21}.

While the original idea of using the RBM within multiscale methods is to accelerate the solution procedures directly associated with the finest scale of the system, the resulting reduced methods are still computationally dependent on the coarse mesh size. Moreover, the global approximation error of the reduced system can often not be rigorously controlled.
Recently in \cite{tsrblod}, these issues have been resolved by an additional reduction of the coarse system that is internally based on a two-scale formulation of the multiscale scheme.\\[-.5em]

\textbf{Error aware trust-region methods for PDE-constrained optimization.}
In the context of PDE-constrained optimization, localized model order reduction with online enrichment has been suggested in \cite{OhlbergerSchaeferEtAl2018}.
The general idea of online enrichment algorithms is to specifically train the reduced models to the parameters that are queried during an optimization process.
This idea has been investigated with rigorous analysis in the context of error-aware trust-region optimization methods \cite{YM2013} and has first been used with global reduced basis approaches in \cite{QGVW2017}.
While trust-region methods in general serve the purpose of global convergence while using cheap locally accurate model functions that are only used in a mostly metric sub-region of "trust", the concept of error-aware trust-region algorithms  is to use a surrogate that allows for an error control and can be adaptively enriched along the path of optimization.
Therefore, a locally accurate surrogate model is used as long as we "trust" the surrogate model, steered by a respective a posteriori error estimator of the surrogate.
Once the boundary of the trust-region is reached and the iterate is accepted, an online enrichment is performed and the process is continued from the current iterate.
Such methods for global reduced basis methods have further been enhanced in terms of more robust algorithms and error estimation in \cite{paper1,paper2,paper3}.\\[-.6em]

We emphasize that, as far as this contribution is concerned, using (localized) MOR in an error-aware trust-region framework to solve a single PDE-constrained parameter optimization problem aims to reduce the overall computational cost of the optimization method.
Thus, neither offline- nor online computations of the approach can be considered negligible.\\[-.5em]

\textbf{Main results.}
This contribution is the first work to deviate from a FEM-based spatially global discretization for the error-aware adaptive trust-region algorithm.
We instead build on an underlying efficient localized discretization framework based on the Petrov-Galerkin LOD \cite{elf} and, as a surrogate, we use its recently introduced two-scale reduced basis approximation (TSRBLOD) \cite{tsrblod}. 
The resulting variant of a trust-region localized RB method (TR-LRB) adaptively constructs local RB models in each of the TR subproblems and deviates from a classical and potentially infeasible globally resolved FEM approximation of the underlying multiscale equation.

The application of error-aware TR methods with localized RB techniques without having to rely on a global finite-element-based discretization is an original contribution of this article. As a necessary ingredient for an efficient adaptive and localized reduced method, we derive new localized error bounds to detect where the model requires local basis updates and to find efficient global coupling techniques.
Following the ideas presented in the TSRBLOD \cite{tsrblod}, a posteriori error estimates are first derived for the primal and dual state equations of the optimality system. Based on these results, we finally obtain rigorous error bounds for the reduced objective functional needed for the TR algorithm.

As a further original contribution, we introduce a relaxed version of the basic TR algorithm that allows for larger step sizes in the initial iterations of the TR-optimization loops without sacrificing the provable convergence of the overall method.
The relaxation of the TR algorithm is applicable for both global FEM-based and localization-based surrogates in the TR method.
The global convergence of the resulting relaxed TR algorithm is stated in Theorem \ref{Thm:convergence_of_TR}.
Notably, such an approach is not restricted to RB-based methods and is also useful if the respective surrogate suffers from a poor or expensive error estimator. 

Finally, we provide numerical experiments that demonstrate the applicability of our approach for large scale optimization problems where global FEM approaches are not feasible anymore. \\[-.5em]

\textbf{Organization of the article.}
The article is organized as follows: In \cref{sec:problem}, we detail the mathematical formulation of the considered multiscale optimization problem. In \cref{sec:relaxed}, we discuss the general formulation of the relaxed error-aware adaptive TR algorithm.
Subsequently, in \cref{sec:tsrblod}, we show that the TSRBLOD can be used as an instance of the TR algorithm.
Lastly, in \cref{sec:loc_experiments}, we present numerical experiments that demonstrate the benefit of localized techniques.

\section{Parameter optimization of multiscale problems}
\label{sec:problem}

In this work, we are concerned with the efficient approximation of linear-quadratic parameter optimization, subject to a parameter-dependent multiscale variational state equation, which is typically 
given as a weak formulation of an underlying elliptic multiscale PDE. 

To this end, let $V$ be a real-valued Hilbert space and
let $\Params \subset \R^P$, with $P \in \N$ denote a compact and convex admissible parameter set, given by box constraints of the form 
%\begin{equation}
$
	\Params:= \left\{\mu\in\mathbb{R}^P\,|\,\mu_\mathsf{a} \leq \mu \leq \mu_\mathsf{b} \right\} \subset \R^P,
$
%\end{equation}
for given parameter bounds $\mu_\mathsf{a},\mu_\mathsf{b}\in\mathbb{R}^P$, where {``$\leq$''} has to be understood component-wise. 

Let $\J\colon V \times \Params \to \R^{>0}$ be a %linear-quadratic
continuous functional
(an explicit example is given in \eqref{eq:L2_misfit} and further assumptions are posed in \cref{sec:a_posteriori_estimates}).
We seek a local solution to the following PDE-constrained optimization problem:

{\color{white}
	\begin{equation}
	\tag{P}
	\label{P}
	\end{equation}
}\vspace{-45 pt} % <= adjust this manually to match the document class and font size!
\begin{subequations}\begin{align}
	\hS{130}
	& \min_{\mu \in \Params} \J(u_\mu, \mu),
	%&&\text{with } \J(u, \mu) = \Theta(\mu) + j_\mu(u) + k_\mu(u, u),
	\tag{P.a}\label{P.argmin}\intertext{%
		subject to $u_\mu \in V$ being the solution of the \emph{state -- or primal -- variational equation}
	}
	&a_\mu(u_\mu, v) = l_\mu(v) & \hspace*{-2em}\text{for all } v \in V.
	\tag{P.b}\label{P.state}
	\end{align}\end{subequations}%
% ... and reset this counter.
\setcounter{equation}{0}% we are done cheating here
For each admissible parameter $\mu \in \Params$, $a_\mu: V \times V \to \R$ denotes a continuous and coercive bilinear form and~$l_\mu: V \to \R$ is a continuous linear functional.

We introduce the reduced cost functional $\Jhat: \Params\to \mathbb{R},\,\mu\mapsto \Jhat(\mu) := \J(u_\mu, \mu)= \J( \mathcal S(\mu),\mu)$, where $S: \Params \to V$ is the parameter to solution map of~\cref{P.state}. Then, \eqref{P} is equivalent to the so-called reduced problem
\begin{align}
\min_{\mu \in \Params} \Jhat(\mu).
\tag{\textnormal{RP}}
\label{Phat}
\end{align}
\par
We are particularly interested in multiscale applications in the sense that the parameter-dependent bilinear form $a_\mu$ involves spatial heterogeneities. Throughout this contribution, we will consider the case of elliptic multiscale problems, where $V = H^1_0(\Omega), \Omega \subset \R^d$.
Furthermore, $a_\mu$ and $l_\mu$ are given as 
\[
a_\mu(u_\mu, v) := \int_\Omega A_\mu(x) \nabla u_\mu(x), \nabla v(x) \dx, \qquad \text{and} \qquad l_\mu(v) := \int_\Omega f_\mu(x) v(x) \dx,
\]
where the family of diffusion or conductivity tensors $A_\mu: \Omega \to \R^{d\times d}$ have a rich multiscale structure that would lead to very high dimensional approximation spaces for the state space when approximated, e.g., with classical finite element type methods. For an example, we refer to Fig.~\ref{fig:thermal_block} below for particular choices of such multiscale conductivity fields.
Moreover, we let $f_\mu \in L^2(\Omega)$. 
We employ standard assumptions on the underlying multiscale PDE.
In particular, $A_{\mu} \in L^{\infty}(\Omega, \mathbb{R}^{d \times d})$ to be symmetric and uniformly elliptic, such that
\begin{align} 
0 < \alpha := \essinf\limits_{x \in \Omega} \inf_{v \in \mathbb{R}^d \setminus \set{0}} \frac{\left( A_\mu(x)v \right) \cdot v}{ v \cdot v} \qquad \text{and} \qquad
\infty > \beta := \esssup\limits_{x \in \Omega} \sup_{v \in \mathbb{R}^d \setminus \set{0}} \frac{\left( A_\mu(x)v \right) \cdot v}{ v \cdot v}.
\end{align}
Further, we let $\kappa:=\beta/\alpha$ be the maximum contrast of $A_{\mu}$ for all $\mu \in \Params$. Moreover, For $v\in V = H^1_0(\Omega)$, we define the standard (equivalent) norms:
\begin{equation*}
	\snorm{v}^2       := \int_\Omega \abs{\nabla v(x)}^2\dx, \qquad\qquad
	\anorm{v}^2       := \int_\Omega \abs{A_\mu^{1/2}\nabla v(x)}^2\dx.
\end{equation*}
Note that $\snorm{}$ is a norm on $V$ due to Friedrich's inequality, and $\anorm{}$ denotes the parameter-dependent energy norm.
We emphasize that the homogeneous Dirichlet boundary conditions and the symmetry of $A_\mu$ are assumed for simplicity and to avoid technicalities in the definition and analysis of the TR-LRB method below. 
All concepts elaborated in this paper can, however, be generalized to more complex underlying PDEs.\par

As usual in the context of reduced basis methods, we require parameter separability for $a_{\mu}$, $l_{\mu}$, 
and $\J$
for an efficient offline-online decomposition.
However, in many cases, this assumption needs to be artificially constructed with the help of the so-called Empirical Interpolation (EI) \cite{BarraultMadayEtAl2004,CS2010,DHO2012,CEGG2014}. 
Furthermore, to derive optimality conditions for \eqref{P}, we require sufficient regularity of the linear and bilinear forms, as well as the objective functional w.r.t. the parameter. 

We define for given $u \in V$, $\mu \in \Params$, the primal residual $r_\mu^\pr(u) \in V'$ associated with \eqref{P.state} by
\begin{align}
r_\mu^\pr(u)[v] := l_\mu(v) - a_\mu(u, v) &&\text{for all }v \in V.
\label{eq:primal_residual}
\end{align}

To find a solution for \eqref{P}, we follow the \emph{first-optimize-then-discretize} approach, i.e., we deduce the first-order necessary optimality system by considering the Lagrangian functional $\LL(u,\mu,p) = \J(u,\mu) + r_\mu^\pr(u)[p]$ and taking its derivative to all variables.
Then, there exists an associated unique Lagrange multiplier $\bar p\in V$, such that 
\begin{subequations}
	\label{eq:optimality_conditions}
	\begin{align}
	r_{\bar \mu}^\pr(\bar u)[v] &= 0 &&\text{for all } v \in V,
	\label{eq:optimality_conditions:u}\\
	\partial_u \J(\bar u,\bar \mu)[v] - a_\mu(v,\bar p) &= 0 &&\text{for all } v \in V,
	\label{eq:optimality_conditions:p}\\
	(\partial_\mu \J(\bar u,\bar \mu)+\nabla_{\mu} r^\pr_{\bar\mu}(\bar u)[\bar p]) \cdot (\nu-\bar \mu) &\geq 0 &&\text{for all } \nu \in \Params.
	\label{eq:optimality_conditions:mu}
	\end{align}	
\end{subequations}
The tuple $(\bar u, \bar \mu) \in V \times \Paramsad$ is called a first-order critical (FOC) point.
We note that the existence of a stationary point can be shown (given mild assumptions), but uniqueness is not necessarily given.
For further details on the optimality system, we refer to \cite[Cor. 1.3]{HPUU2009}. 

While \eqref{eq:optimality_conditions:u} restates the state equation \eqref{P.state}, from \eqref{eq:optimality_conditions:p}, we observe the so-called \emph{adjoint-} or \emph{dual} equation  
\begin{align}
a_\mu(q, p_\mu) = \partial_u \J(u_\mu, \mu)[q]
%= j_\mu(q) + 2 k_\mu(q, u_\mu)
&&\text{for all } q \in V,
\label{eq:dual_solution}
\end{align}
with solution $p_{\mu} \in V$ for a fixed $\mu \in \Params$ and given the solution $u_\mu \in V$ to the state equation \eqref{P.state}.
For given $u, p \in V$, we introduce the dual residual $r_\mu^\du(u, p) \in V'$ associated with \eqref{eq:dual_solution} as
\begin{align}
r_\mu^\du(u, p)[q] :=  \partial_u \J(u_\mu, \mu)[q] 
%j_\mu(q) + 2k_\mu(q, u) 
- a_\mu(q, p) &&\text{for all }q \in V.
\label{eq:dual_residual}
\end{align}
The dual solution $p_{\mu}$ is of great interest for computing the gradient of the objective functional~$\Jhat$ efficiently, i.e.
$$\nabla_\mu \Jhat(\mu) = \nabla_\mu \J(u_{\mu}, \mu) + \nabla_\mu r_{\mu}^\pr(u_\mu)[p_{\mu}].$$
We also note that sufficient conditions based on the Hessian can be used to verify that the stationary point is indeed a minimum; cf.~\cite{paper1}.
However, in our algorithms, this condition can always be checked a posteriori.

\section{Relaxed trust-region method}
\label{sec:relaxed}

In this section, we introduce a relaxed variant of the error-aware adaptive reduced TR algorithm that has been initially proposed in \cite{YM2013} and has so far been applied to global RB-based surrogates in \cite{QGVW2017,paper1,paper2}.
The proposed relaxation can be used as a warm start of the certified TR algorithm and, importantly, fulfills the same general convergence result as presented in \cite{paper2}.
Notably, the relaxation technique is entirely independent of the choice of the specific surrogate model and is helpful for both FEM-based as well as localized methods, cf.~\cref{sec:loc_experiments}.
We emphasize that there are many reasons why we propose a relaxation of the original algorithm.~
More details are given throughout this section and in \cref{sec:loc_experiments}.
In the sequel, we carefully introduce the new variant and relate it to the original algorithm without relaxation from \cite{paper1,paper2}  further below.

Let~$\Jhat_h$ be a sufficiently accurate discrete version of $\Jhat$, meaning that the primal and dual equations above are discretized with an appropriate, most likely high-dimensional, finite-dimensional space $V_h$ with fine mesh~$\gridh$.
We call this discrete reference model the full order model (FOM) and make the following assumption:
\begin{assumption}[The FOM is the ``truth'']
	\label{asmpt:truth}
	We assume that the FOM discretization error $| \Jhat(\mu) - \Jhat_h(\mu)|$ can be neglected,
	which also translates to the primal and dual discretization error being negligible.
\end{assumption}

At this point we do not specify the concrete discretization scheme for computing~$\Jhat_h$.
In the already existing works \cite{paper1,paper2,QGVW2017}, this is based on a standard FEM approximation.
However, as we see later, we may also choose a multiscale method as the underlying FOM method.

Let $\Jhat_\red$ denote a surrogate of the objective functional $\Jhat_h$, which is obtained by replacing the FOM with a reduced order model (ROM).
Further, we assume that the surrogate $\Jhat_\red$ admits an a posteriori error result, such that
\begin{equation}
	| \Jhat_h(\mu) - \Jhat_\red(\mu) | \leq \Delta_{\Jhat_r}(\mu),
\end{equation}
where $\Delta_{\Jhat_r}(\mu)$ can be computed without explicitly evaluating $\Jhat_h$ at $\mu \in \Params$.

While the algorithm is not restricted to it, in  \cite{paper1,paper2,QGVW2017}, global RB methods have been used, which is why the algorithm is often abbreviated as the TR-RB method, cf.~\cref{rem:generality}.
We refer to \cite{paper1,paper2} for corresponding a posteriori error estimation results in the global RB case.
A particular variant of a localized RB (LRB) approach will be discussed and analyzed in \cref{prop:LOD_reduced_functional}, where also respective error estimates are derived.

The relaxed error-aware adaptive TR algorithm iteratively computes a first-order critical (FOC) point of problem \eqref{P} and can be divided into an outer- and multiple inner optimization procedures.
Roughly speaking, every outer iteration includes costly FOM evaluations, an enrichment phase, and a cheap inner sub-problem.

In a standard TR method, in each outer iteration $k\geq 0$ of the TR, a local sub-problem is solved, minimizing a cheap model function $m^{(k)}$ for the expensive objective functional $\J_h$ that is only considered valid in the so-called trust-region (for instance determined by a metric distance).
In our adaptive TR approach, we use
$$m^{(k)}(\cdot):= \cJhatn^{(k)}(\mu^{(k)}+\cdot \,),$$
i.e., the locally accurate surrogate model, adaptively enriched along the outer path of the TR approach (indicated by the subscript $k$).
In particular, unlike a standard TR approach, we always reuse the enriched surrogate, which we only initialize with the initial guess $\mu^{(0)} \in \Params$ at $k=0$.

In our method, we solve the sub-problem in the error-aware trust-region with radius $\delta^{(k)}$, characterized by the a posteriori error estimator of the surrogate and relaxed by a corresponding relaxation parameter $\varepsilon^{(k)}$.
To this end, let $(\varepsilon^{(k)})_k$ denote an a priori chosen null sequence, where we assume the existence of $K \in \mathbb{N}$ such that $\varepsilon^{(k)} = 0$ for all $k > K$.
We solve for a solution $\bar s \in \Params$ of
\begin{equation}
	\label{eq:rel_TRRBsubprob}
	\min_{s\in\Paramsad} \cJhatn^{(k)}(\mu^{(k)} + s) \qquad \text{ s.t. } \qquad \frac{\Delta_{\Jhat_\red^{(k)}}(\mu^{(k)} + s)}{\cJhatn^{(k)}(\mu^{(k)} + s)}\leq \delta^{(k)} + \varepsilon^{(k)},
\end{equation}
and set $\mu^{(k+1)} := \mu^{(k)}+ \bar s$. Hence, unlike in the previous works, the relaxation allows for larger steps in the sub-problem, which is particularly of interest if the radius has been chosen inappropriately small.

We solve \cref{eq:rel_TRRBsubprob} with an inner optimization routine and, at inner iteration $\el$, we set
\begin{align}
	\label{eq:General_Opt_Step_point}
	\mu^{(k,\el)}(j):= \Proj_{\Paramsad}(\mu^{(k,\el)} + \kappa^j d^{(k,\el)}) \in{\Paramsad} && \textnormal{for } j\geq 0,
\end{align}
where $\kappa\in(0,1)$ and $\Proj_\Params$ is a projection on the admissible parameter space. Furthermore, $d^{(k,\el)}\in\mathbb{R}^P$ is a descent direction at the iteration $(k,\el)$, computed, e.g., by the projected BFGS, reported in \cite[Section 5.5.3]{kelley}.
It has been shown in \cite{paper2} that a projected Newton algorithm for the sub-problems can enhance the convergence speed and accuracy of the inner sub-problem which we do not consider for simplicity.
Furthermore, we enforce an Armijo-type condition for inequality constraints 
\begin{equation}
	\label{Armijo}\cJhatn^{(k)}(\mu^{(k,\el)}(j)) - \cJhatn^{(k)}(\mu^{(k,\el)}) \leq  -\frac{\kappa_{\mathsf{arm}}}{\kappa^j} \| \mu^{(k,\el)}(j)-\mu^{(k,\el)}\|^2_2,
\end{equation}
with $\kappa_{\mathsf{arm}}=10^{-4}$, combined with the trust-region constraint in \eqref{eq:rel_TRRBsubprob}.

We terminate with a standard reduced FOC termination criteria, modified with $\Proj_\Params$ to account for constraints on the parameter space as proposed in \cite{paper1}:
\begin{subequations}\label{Termination_crit_sub-problem}
	\begin{equation}
	\label{FOC_sub-problem}
	\big\|\mu^{(k,\el)}-\Proj_{\Paramsad}(\mu^{(k,\el)}-\nabla_\mu \Jhat_\red^{(k)}(\mu^{(k,\el)}))\big\|_2\leq \tau_\text{{sub}},
	\end{equation}
    where $\mu^{(k,\el)} := \mu^{(k)}+ s^{(\el)}$.
	Additionally, we use a second boundary termination criterion to prevent the sub-problem from spending too much computational time on the boundary of the (relaxed) trust-region.
	\begin{equation}
		\label{eq:rel_cut_of_TR}
		\beta_2(\delta^{(k)}+ \varepsilon^{(k)}) \leq \frac{\Delta_{\Jhat_\red^{(k)}}(\mu^{(k,\el)})}{\Jhat_\red^{(k)}(\mu^{(k,\el)})} \leq \delta^{(k)}+ \varepsilon^{(k)}.
	\end{equation}
\end{subequations}
\noindent where $\tau_\text{{sub}}\in(0,1)$ is a predefined tolerance and $\beta_2\in(0,1)$, generally close to one.

After the outer iterate $\mu^{(k+1)} := \mu^{(k,L)}$ of \eqref{eq:rel_TRRBsubprob} has been computed in $L$ inner optimization steps, the (relaxed) sufficient decrease condition helps to decide whether to accept the iterate:
\begin{align}
	\label{eq:suff_decrease_condition_rel}
	\Jhat_h^{(k+1)}(\mu^{(k+1)})\leq \cJhatn^{(k)}(\mu_\text{{AGC}}^{(k)}) +
	\varepsilon^{(k)}, 
\end{align}
where $\mu_\text{AGC}^{(k)}$ denotes the approximated generalized Cauchy point, in our case, the first (gradient-descent) step of the sub-problem.
We note that this condition potentially allows for a step that increases the functional by a factor of $\varepsilon^{(k)}$.
Condition \eqref{eq:suff_decrease_condition_rel} can be cheaply checked by using a sufficient and necessary condition, cf. \cite{paper1,YM2013}.
However, if the cheap conditions are not applicable, we check~\eqref{eq:suff_decrease_condition_rel} explicitly.
If the iterate is rejected, we shrink the TR-radius and repeat the sub-problem.
If, instead, $\mu^{(k+1)}$ is accepted, we use the parameter to enrich the reduced model.
We emphasize that the relaxation of \eqref{eq:suff_decrease_condition_rel} is substantial for the relaxed method.
Without a relaxation \eqref{eq:suff_decrease_condition_rel}, the method may disregard many of the outer iteration points, which would contradict the relaxation of the sub-problem and may result in a very slow method.
The potential rejection and the corresponding extra FOM effort to adjust the TR radius in the early stages of the algorithm is indeed one of the main reasons for the relaxation of the original method.
That is, the choice of a "perfect" TR-radius and shrinking- or enlarging factors is problem-dependent and, at least to our knowledge, always has to be found by (computationally demanding) trials.

Overall convergence of the algorithm can be verified with a FOM-based FOC condition
\begin{equation} \label{eq:global_termination_TR}
\|\mu^{(k+1)}-\Proj_\Params(\mu^{(k+1)}-\nabla_\mu \Jhat_h(\mu^{(k+1)}))\|_2\leq \tau_{\text{\rm{FOC}}},
\end{equation}
where the FOM quantities are available from the enrichment. Moreover, this allows to compute a condition for possible enlargement of the TR radius if the reduced model is better than expected, cf.~\cite{paper1}.
We also mention that a reduced Hessian can be used for an a posteriori post-processing for the optimal parameter, cf.~\cite{paper2}.

\begin{figure}[t]
	\centering
	\begin{tikzpicture}[>=stealth]
	\node[inner sep=0pt, opacity=.5] (thirdTR) at (0.22,0.22)
	{\includegraphics[width=.325\textwidth]{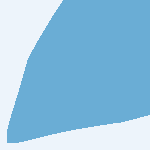}};
	\node[inner sep=0pt, opacity=.4] (secondTR) at (0.22,0.22)
	{\includegraphics[width=.325\textwidth]{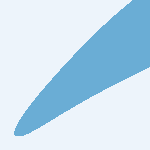}};
	\node[inner sep=0pt, opacity=.3] (firstTR) at (0.22,0.22)
	{\includegraphics[width=.325\textwidth]{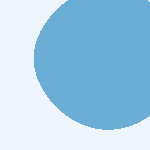}};
	
	\draw[black](2.3,-2.1) node[right] {$\mathcal{P}$};
	\draw[black](-0.3,2.) node[right] {$\textcolor{black}{\Delta_{
				\hat{\mathcal{J}}_r^{(k)}}}$};
	\draw[black](1.5,1.) node[right] (scrip) {$\mu^{(0)}$};
	\draw[fill=black] (1.5,1.) circle [radius=0.03, color=black];
	
	% outer
	\draw[->,black!40, dashed] (1.5,1.) -- (-0.55,-0.7);
	% inner
	\draw[->,black] (1.5,1.) -- (1.6,0.35);
	\draw[fill=black!40] (1.6,0.32) circle [radius=0.02, color=black!40];
	\draw[black](1.6,0.32) node[right] (scrip) {\scriptsize \textcolor{black!60}{$\mu^{(0, 1)}$}};
	\draw[->,black] (1.6,0.32) -- (1.25,-0.6);
	\draw[fill=black!40] (1.22,-0.6) circle [radius=0.02, color=black!40];
	\draw[black](1.22,-0.6) node[right] (scrip) {\scriptsize \textcolor{black!60}{$\mu^{(0, 2)}$}};
	\draw[->,black] (1.22,-0.6) -- (0.9,-0.9);
	\draw[fill=black!40] (0.87,-0.9) circle [radius=0.02, color=black!40];
	\draw[black](0.87,-0.9) node[below] (scrip) {\qquad\scriptsize \textcolor{black!60}{$\mu^{(0, 3)}$}};
	\draw[->,black] (0.87,-0.9) -- (0.2,-1.);
	\draw[fill=black!40] (0.17,-1.) circle [radius=0.02, color=black!40];
	\draw[black](0.17,-1.) node[below] (scrip) {\quad\scriptsize \textcolor{black!60}{$\mu^{(0, 4)}$}};
	\draw[->,black] (0.17,-1.) -- (-0.55,-0.75);
	
	\draw[black](-0.6,-0.75) node[below] (scrip) {\,$\mu^{(1)}$};
	\draw[fill=black] (-0.6,-0.75) circle [radius=0.03, color=black];
	
	\draw[->,black!40, dashed] (-0.6,-0.75) -- (-1.15,-0.65);
	%\draw[black](-1.2,-0.65) node[below] (scrip) {$\mu^{(2)}$};
	\draw[fill=black](-1.2,-0.65) circle [radius=0.03, color=black];
	
	\draw[->,black!40, dashed] (-1.25,-0.65)-- (-1.6,-0.75);
	\draw[fill=black](-1.65,-0.75) circle [radius=0.03, color=black];
	\draw[black](-1.65,-0.75) node[left] (scrip) {$\bar{\mu}$};
	
	\draw[black](1.2,-1.7) node[right] {\scriptsize \textcolor{black!70}{$\delta^{(0)}$}};
	\draw[black](-1.7,-1.7) node[right] {\scriptsize \textcolor{black!70}{$\delta^{(1)}$}};
	\draw[black](-0.7,-1.8) node[right] {\scriptsize \textcolor{black!70}{$\delta^{(2)}$}};
	
	\draw (-2.5,-2.5) rectangle (2.98,2.98);
	\end{tikzpicture}
	\caption{Illustration of the error-aware adaptive TR algorithm (without relaxation). From the starting parameter $\mu^{(0)}$, several inner optimization steps (solid black arrows) are performed until the sub-problem reaches the boundary of the TR, constrained by $\Delta_{\Jhat_\red^{(0)}}< \delta^{(0)}$. For the second outer iteration (dashed grey arrows), the TR radius for the sub-problem is shrunk. Already after the third outer iteration, the optimum lies in the interior of the trust-region.}
	\label{fig:illustration}
\end{figure}
\begin{algorithm2e}[!h]
	\KwData{
	initial guess $\mu^{(0)}$, initial radius $\delta^{(0)}$, null sequence $(\varepsilon^{(k)})_k$, and tolerances $\beta_2$, $\tau_\textnormal{{sub}}$, $\tau_\textnormal{{FOC}}$.
%	Initial TR radius $\delta^{(0)}$, TR shrinking factor $\beta_1\in (0,1)$, tolerance for enlarging the TR radius $\eta_\varrho\in [\frac{3}{4},1)$, initial parameter $\mu^{(0)}$, stopping tolerance for the sub-problem $\tau_\textnormal{{sub}}\ll 1$, stopping tolerance for the first-order critical condition $\tau_\textnormal{{FOC}}$ with $\tau_\textnormal{{sub}}\leq\tau_\textnormal{{FOC}}\ll 1$, safeguard for TR boundary $\beta_2\in(0,1)$, parameter control tolerance $\tau_{\mu}$.
	}
	Initialize RB model with $\mu^{(0)}$ and set $k=0$\; 
	\While
	{not \eqref{eq:global_termination_TR}}{
		Compute $\mu^{(k+1)}$ as solution of~\eqref{eq:rel_TRRBsubprob} with termination criteria~\eqref{Termination_crit_sub-problem}\label{solve_sub_problem}\;
		\uIf{Sufficient decrease condition~\eqref{eq:suff_decrease_condition_rel} is fulfilled with relaxation $\varepsilon^{(k)}$\label{suff_dec}} {
			Accept $\mu^{(k+1)}$ and
			enrich the RB model at $\mu^{(k+1)}$\; 
			Possibly enlarge the TR-radius \label{accept}\; 
		}
		\Else {
			Reject $\mu^{(k+1)}$, shrink the TR radius $\delta^{(k)}$ and go to Line \ref{solve_sub_problem}\;		
		}
		Set $k=k+1$\;
	}
	\caption{Basic TR-RB algorithm} 
	\label{alg:basic_TRRB}
\end{algorithm2e}

To conclude, \cref{alg:basic_TRRB} summarizes the main steps of the algorithm.
We note that in lines \ref{suff_dec} and \ref{accept} of  \cref{alg:basic_TRRB}, we have neglected detailed information on the exact computational procedure concerning the cheap conditions for the sufficient decrease conditions according to \cite{YM2013} and the enlarging of the TR-radius with a suitable accessible condition.
For both features, we again refer to~\cite{paper1}. In \cref{fig:illustration}, we illustrate the described procedure for a simple example with a two-dimensional parameter space.

\begin{remark}[Choice of the surrogate] \label{rem:generality}
	We emphasize that the above presented relaxed algorithm can be used for arbitrary surrogates that enable a corresponding error control and the convergence can be shown, cf. \cref{Thm:convergence_of_TR}.
	The variants in former works were abbreviated by the TR-RB algorithm, where RB explicitly refers to the (global) reduced basis reduction.
	In order to underline the generality, we avoided the explicit use of this abbreviation.
\end{remark}

\begin{remark}[Equivalence of the error aware R-TR and TR]
	Independent of the reduction approach, we also emphasize that the introduced relaxed TR approach is (apart from the sub-problem solver and minor specifics) equivalent to the algorithm from \cite{paper1} and \cite{paper2} if $\varepsilon^{(k)} = 0$.
	Thus, the relaxed TR algorithm can be interpreted as the original TR algorithm with a warm start.
\end{remark}

Concerning the convergence, we can reuse the convergence result of the TR algorithm from \cite[Theorem 3.8]{paper2}, formulated in the following theorem.
\begin{theorem}[Convergence of the relaxed error aware adaptive TR algorithm]
	\label{Thm:convergence_of_TR}
	Let $(\varepsilon^{(k)})_k$ be a null sequence as defined above. Let sufficient assumptions on the Armijo search to solve \eqref{eq:rel_TRRBsubprob} be given; cf.~\cite{paper2}.
	Then every accumulation point $\bar\mu$ of the sequence $\{\mu^{(k)}\}_{k\in\mathbb{N}}\subset \Params$ generated by the described R-TR-RB algorithm is an approximate first-order critical point for $\Jhat_h$, i.e., it holds
	\begin{equation}
		\label{First-order_critical_condition}
		\|\bar \mu-\Proj_\Params(\bar \mu-\nabla_\mu \Jhat_h(\bar \mu))\|_2 = 0.
	\end{equation}
\end{theorem}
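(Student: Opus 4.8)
The plan is to follow the convergence proof of the fully certified algorithm in \cite[Theorem 3.8]{paper2} and to treat the two relaxation sequences as asymptotically vanishing perturbations of the certified acceptance and cut-off rules. Writing $\chi(\mu) := \|\mu-\Proj_\Params(\mu-\nabla_\mu \Jhat_h(\mu))\|_2$ for the first-order criticality measure, the goal \eqref{First-order_critical_condition} is to show $\chi(\bar\mu)=0$ for every accumulation point $\bar\mu$. I would argue by contradiction: suppose some accumulation point satisfies $\chi(\bar\mu)=2\epsilon_0>0$ and fix a subsequence $\mu^{(k_j)}\to\bar\mu$. By continuity of $\nabla_\mu\Jhat_h$ and of the projection onto the compact convex set $\Params$, one then has $\chi(\mu^{(k_j)})\geq\epsilon_0$ for all large $j$.

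First I would record the two structural facts that make the reduced functional a faithful local surrogate at each accepted iterate. By construction of the RB enrichment, $\cJhatn^{(k)}$ reproduces both the value and the gradient of $\Jhat_h$ at $\mu^{(k)}$, so that $\cJhatn^{(k)}(\mu^{(k)})=\Jhat_h(\mu^{(k)})$ and the reduced criticality measure at $\mu^{(k)}$ coincides with $\chi(\mu^{(k)})$; moreover the a posteriori bound $|\Jhat_h(\mu)-\cJhatn^{(k)}(\mu)|\leq\Delta_{\Jhat_\red^{(k)}}(\mu)$ together with the relaxed trust-region constraint in \eqref{eq:rel_TRRBsubprob} controls the model error uniformly throughout the current trust region. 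These are precisely the ingredients of \cite{paper1,paper2}; the only change is that the admissible radius is now the effective value $\tilde\delta^{(k)}:=\delta^{(k)}+\varepsilon_\text{TR}^{(k)}$.

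Next I would invoke the approximate generalized Cauchy point estimate from \cite{paper2}: along the subsequence on which $\chi(\mu^{(k_j)})\geq\epsilon_0$, the AGC point yields a guaranteed model decrease bounded below by $c\,\epsilon_0\min(\tilde\delta^{(k_j)},\epsilon_0/C)$ for constants depending only on the data. Combining the relaxed sufficient-decrease condition \eqref{eq:suff_decrease_condition_rel} with $\cJhatn^{(k)}(\mu^{(k)})=\Jhat_h(\mu^{(k)})$ then gives, for every accepted index $k$ of the subsequence,
\begin{equation*}
\Jhat_h(\mu^{(k+1)})\leq \Jhat_h(\mu^{(k)}) - c\,\epsilon_0\min\bigl(\tilde\delta^{(k)},\epsilon_0/C\bigr) + \varepsilon_\text{cond}^{(k)}.
\end{equation*}
Since $\Jhat_h$ is continuous on the compact set $\Params$, it is bounded below, so summing this inequality forces a contradiction as soon as the per-step decrease stays bounded away from zero: because $\varepsilon_\text{cond}^{(k)}\to0$ by \eqref{eq:converge_to_zero}, for all large $k$ the term $\varepsilon_\text{cond}^{(k)}$ is dominated by one half of the guaranteed decrease, leaving a net decrease of at least $\tfrac12 c\,\epsilon_0\min(\tilde\delta^{(k)},\epsilon_0/C)$ infinitely often. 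Note that this argument needs only $\varepsilon_\text{cond}^{(k)}\to0$, not summability, which is why \eqref{eq:converge_to_zero} suffices.

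The main obstacle, and the only place where the relaxation must be handled with care, is to rule out that the effective radii $\tilde\delta^{(k_j)}$ collapse to zero along the critical subsequence, which would make the above net decrease degenerate. Here I would transfer the standard trust-region mechanism of \cite{paper2} that keeps the radius bounded below near a non-critical point: since the model is exact at the center $\mu^{(k)}$ and its error is controlled by $\Delta_{\Jhat_\red^{(k)}}$ throughout the trust region via the constraint in \eqref{eq:rel_TRRBsubprob}, an accepted step with $\tilde\delta^{(k)}$ bounded below realizes a genuine fraction of the predicted AGC decrease, so the certified acceptance rule eventually holds and the radius is no longer shrunk. The relaxation alters the acceptance and cut-off rules only by the additive nonnegative quantities $\varepsilon_\text{cond}^{(k)}$ and $\varepsilon_\text{TR}^{(k)}$; hence every step accepted by the certified rule is a fortiori accepted by \eqref{eq:suff_decrease_condition_rel}, and the certified lower bound on the radius survives verbatim. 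Feeding this lower bound back into the decrease estimate produces the contradiction, whence $\chi(\bar\mu)=0$. Because $\varepsilon_\text{TR}^{(k)},\varepsilon_\text{cond}^{(k)}\to0$, the relaxed algorithm becomes asymptotically indistinguishable from the certified one, which legitimizes the reduction to \cite[Theorem 3.8]{paper2} and completes the proof.
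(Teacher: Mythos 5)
Your overall strategy---reduce to the certified convergence proof of \cite[Theorem 3.8]{paper2} and treat $\varepsilon_\text{TR}^{(k)}$, $\varepsilon_\text{cond}^{(k)}$ as asymptotically vanishing perturbations of the cut-off and acceptance rules---is exactly the route the paper takes (the paper in fact offers nothing beyond this one-line reduction). The ingredients you transfer are the right ones: exactness of $\cJhatn^{(k)}$ in value and gradient at the enrichment parameter, the AGC decrease bound, and the observation that every certified acceptance is a fortiori a relaxed acceptance, so the certified lower bound on the effective radius survives.

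There is, however, one concrete gap, and it sits precisely at the step you flag as unproblematic: the claim that the telescoping argument ``needs only $\varepsilon_\text{cond}^{(k)}\to0$, not summability.'' The relaxed acceptance rule \eqref{eq:suff_decrease_condition_rel} destroys the monotonicity of $k\mapsto\Jhat_h(\mu^{(k)})$ that drives the certified proof: at an iterate where the reduced criticality measure is small, the guaranteed AGC decrease is negligible and the true objective may \emph{increase} by up to $\varepsilon_\text{cond}^{(k)}$. Your contradiction exploits the strong decrease only along the subsequence $\{k_j\}$ converging to the putative non-critical accumulation point; summing the telescoped inequality over all $k\le K$ therefore yields
\begin{equation*}
\tfrac12\,d_*\,\bigl|\{j:k_j\le K\}\bigr| \;\le\; \Jhat_h(\mu^{(0)})-\inf_{\Params}\Jhat_h \;+\; \sum_{k\le K}\varepsilon_\text{cond}^{(k)} \;+\; C,
\end{equation*}
where $d_*>0$ is the uniform lower bound on the guaranteed decrease along the critical subsequence. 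If $\{k_j\}$ is sparse while $\sum_k\varepsilon_\text{cond}^{(k)}=\infty$ (e.g.\ $\varepsilon_\text{cond}^{(k)}=1/\log k$, which is decreasing and tends to zero, so it satisfies \eqref{eq:converge_to_zero}), the right-hand side can dominate the left and no contradiction follows. To close the argument you must either (i) additionally assume summability of $(\varepsilon_\text{cond}^{(k)})_k$---satisfied by the geometric choice $10^{10-k}$ used in the experiments---so that the auxiliary sequence $\Jhat_h(\mu^{(k)})-\sum_{i<k}\varepsilon_\text{cond}^{(i)}$ is monotone non-increasing and bounded below, forcing the guaranteed per-step decrease to vanish along the \emph{whole} sequence exactly as in the certified proof; or (ii) show that the criticality measure stays bounded away from zero along the entire tail of the iteration, not merely along the subsequence converging to $\bar\mu$, which your argument does not establish.
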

\begin{proof}
	Since $\varepsilon^{(k)} = 0$ for all $k > K$, we can consider the result of the $K$ iterations of the R-TR as a warm start for the TR.
	Hence, let $\mu^{(K)}$ be the initial guess of the TR algorithm. Then, the convergence theorem  
	\cite[Theorem~3.8]{paper2} can be used and automatically holds for the R-TR.
\end{proof}

We also note that the convergence result is independent of the specific sub-problem solver, e.g., projected Newton (as in \cite{paper2}) or projected BFGS (as used below).
As with all different optimization methods for the discussed optimization problems, it can always happen that the R-TR algorithm finds a different local minimum than the originally proposed TR algorithm.

\section{Optimization of multiscale problems based on the TSRBLOD}
\label{sec:tsrblod}

One of the main contributions of this article is to equip the above-introduced relaxed error-aware adaptive TR method with a localized reduced basis approach such that costly global evaluations of \eqref{P.state} are not required anymore.
Our approach is based on the TSRBLOD from \cite{tsrblod}, which has recently been introduced for the Petrov--Galerkin version of the localized orthogonal decomposition method (PG--LOD) \cite{elf}.
The (relaxed) TR approach that we detailed in the previous section can naturally be used for the localized case simply by using appropriate choices for the discrete functional $\Jhat_h$ and its reduced version $\Jhat_\red$.
Specific circumstances regarding the initial construction and enrichment of the localized surrogate may occur.

In the following, we give a precise definition of a localized reduced FOM functional~$\Jhat_h^{\text{loc}}$, the localized reduced ROM functional~$\Jhat_\red^{\text{loc}}$, and its gradients~$\nabla\Jhat_h^{\text{loc}}$, and~$\nabla\Jhat_\red^{\text{loc}}$.
Furthermore, we elaborate on the error estimator~$\Delta_{\Jhat_r^{\text{loc}}}$ and provide details on the localized online enrichment of the ROM.

\subsection{Localized full order model using the LOD}
\label{sec:localized_FOM}

The LOD is a well-established multiscale method that is flexible for rough and non-periodic multiscale coefficients.
Since we use the PG--LOD as the FOM model of the optimality system \eqref{eq:optimality_conditions}, we now explain the primary concepts of the method.
We refer to \cite{LODbook} for more background and to \cite{tsrblod}, where the same notation is used.\\

\textbf{Localized orthogonal decomposition method.}
As typical for multiscale methods, we use a low-dimensional coarse-mesh $\Gridh$ with mesh size $H \gg h$ such that $\gridh$ is a refinement of $\Gridh$. We construct the respective FE space by $V_H := V_h \cap \mathcal{P}_1(\Gridh)$.
The corresponding ideal LOD space is defined by 
\begin{equation*}
V_{H, \mu}^{\text{ms}} := (I - \QQm) (V_H).
\end{equation*}
Here, the fine-scale corrections $\QQm(v) \in \Vfh$, for a given $v_h \in V_h$, are the solution of
\begin{equation}\label{eq:corrector_problem}
a_{\mu}(\QQm(v_h),\vf) = a_{\mu}(v_h,\vf) \qquad\qquad \text{for all } \; \vf \in \Vfh,
\end{equation}
where the fine-scale space $\Vfh := \ker(\IH) \subset V_h$ can be obtained with an interpolation operator $\IH : V_h \to V_H$ that maps a high-fidelity function $v_h \in V_h$ to the coarse FE space $v_H \in V_H$.
In conclusion, $\QQm(v_h)$ is the $a_\mu$-orthogonal projection of $v_h$ onto $\Vfh$, such that we have $a_\mu$-orthogonal splitting of $V_h$, i.e., $V_h = V_{H,\mu}^{\text{ms}} \oplus_{a_{\mu}} \Vfh$.

Since both spaces are still defined on the whole computational domain, we use corresponding truncated fine-scale correctors $\QQktm(v) \in \Vfhkt$, where $\Vfhkt: = \Vfh \cap H^1_0(U_\kl(T))$ is the localized fine-scale space on a coarse-scale patch $U_\kl(T)$. Here, $U_\kl(T) := U(U_{\kl-1}(T))$ is defined recursively with $U_1(T) :=  U(T)$
and $U(S), S \subset \Omega$ denotes the union of all elements in $\Gridh$ that intersect $S$.
Hence, we solve locally on $U_\kl(T)$
\begin{align} \label{eq:loc_cor_problems}
a_{\mu}(\QQktm(v_h),\vf) = a^T_{\mu}(v_h, \vf) \qquad \qquad \text{for all }\; \vf \in \Vfhkt,
\end{align}
where $a^T_{\mu}$ denotes the bilinear form obtained by restricting the integration domain in the definition of $a_{\mu}$ to $T \in \Gridh$. 
The resulting localized space can then be defined as
\[
V_{H,\kl,\mu}^{\text{ms}} := (I - \QQkm)(V_H),
\]
where $\QQkm := \sum_T \QQktm$ contains all localized corrector functions $\QQktm$ from \eqref{eq:loc_cor_problems}.

Finally, we approximate the solution of \eqref{P.state} by the Petrov--Galerkin version of the LOD: Find $\uhkmsm \in V_{H,\kl,\mu}^{\text{ms}}$, such that
\begin{equation}
\label{eq:PG_LOD}
	a_{\mu}(\uhkmsm,v_H) =  l_\mu(v_H) \qquad \text{for all } \; v_H \in V_H.
\end{equation}

We note that the standard Galerkin formulation can be obtained by using $V_{H,\kl,\mu}^{\text{ms}}$ also as the test function; see \cite{LODbook}.
To ensure that~\eqref{eq:PG_LOD} has a unique solution, we require inf-sup stability of $a_\mu$ w.r.t.\ $V_{H,\kl,\mu}^{\text{ms}}$ and $V_H$. 
As proposed in \cite{tsrblod}, an appropriate inf-sup stability constant is given by
\begin{equation} \label{eq:infsup_constant}
\CPG := \inf_{0\neq w_H\in V_H}\sup_{0\neq v_H\in V_H}
\frac{a_\mu(w_H-\QQktm(w_H), v_H)}{\anorm{w_H-\QQktm(w_H)}\snorm{v_H}} > 0,
\end{equation}
for all $l>l_0$, $l_0 \in \N$. The proof of the inf-sup stability is conditioned on sufficiently large~$\kl$; cf.~\cite{tsrblod, elf, HM19}.\\

\textbf{A priori error estimate of the PG--LOD.}
Writing the solution of~\eqref{eq:PG_LOD} as $\uhkmsm=\uhkm - \QQkm(\uhkm)$ with $\uhkm \in V_H$,
we have the following a priori estimate, which was first shown in~\cite{elf}.
\begin{theorem}[A priori convergence result for the PG-LOD]
	\label{thm:pglod_convergence}
	For a fixed parameter $\mu \in \Params$, let 
	$u_{h,\mu} \in V_h$ be the finite-element solution of \eqref{P.state} given by
	$
	a_\mu(u_{h,\mu}, v_h) = l(v_h), \text{ for all } v_h \in V_h.
	$
	Then, it holds that
	\begin{equation*}
		\norm{u_{h,\mu} - u_{H,\kl,\mu}}_{L^2} + \norm{u_{h,\mu} - u_{H,\kl,\mu}^{\text{ms}}}_{1} \lesssim (H + \theta^\ell
		\kl^{d/2}) \norm{f}_{L^2(\Omega)},
	\end{equation*}
	with $0 < \theta < 1 $ independent of $H$ and $\kl$, but dependent on the contrast $\kappa = \beta/\alpha$.
	The result, in particular, follows from the exponential decay of the fine-scale correctors for all $v_H \in V_H$:
	\begin{equation}\label{eq:exponential_decay}
		\anorm{(\QQm - \QQkm)(v_H)} \lesssim \theta^\ell \ell^{d/2} \anorm{v_H}.
	\end{equation}
\end{theorem} 
For a detailed discussion on the decay variable $\theta$, we refer to~\cite{MH16, HM19, MP14}.
We emphasize that the LOD is generally vulnerable to high-contrast problems or rapid coarse-scale changes induced by high conductivity channels since $\theta$ depends on the contrast of the problem.
For neglecting the issue of high contrast in the LOD, the interpolation operator $I_H$ has to be adjusted.
For instance, works in this direction have been done in \cite{MH16,brown2016multiscale,peterseim2016robust}.
Note that using a right-hand-side correction as in~\cite{HKM20,HM19}, for instance, can further enhance \cref{thm:pglod_convergence} by removing the $H$ dependency but requires additional corrector problems.

\begin{remark}[Fulfillment of \cref{asmpt:truth}]
	For \cref{asmpt:truth}, we may assume that an appropriate choice of the coarse-mesh size $H$, fine-mesh size $h$, and localization parameter $\kl$ is given to cope with the underlying problem.
	This means that the LOD errors $\norm{\uhm - \uhkm}_{L^2}$, $\norm{\phm - \phkm}_{L^2}$, and the corresponding $H^1$-errors are negligible, where $\uhm,\phm \in V_h$ denote the FE solution of the primal and dual equation, respectively.
\end{remark}

\textbf{Approximation of the objective functional.}
The corresponding primal variable can now be used to compute the corresponding localized FOM objective functional, i.e.
\begin{equation}
	\Jhat_h^{\text{loc}}(\mu) := \J(\uhkm, \mu).
\end{equation}
The subindex $h$ in $\Jhat_h^{\text{loc}}$ refers to the fact that the construction of the solution space $V_{H,\kl,\mu}^{\text{ms}}$ for solving \eqref{eq:PG_LOD} internally requires the computation of the correctors that resolve the fine-scale mesh, which can then be discarded immediately.
Note that we do not plugin $\uhkmsm \in V_{H,\kl,\mu}^{\text{ms}}$ into $\J$ since the basis of $V_{H,\kl,\mu}^{\text{ms}}$ may not be available, which
is the case if the fine-scale correctors can not be stored.
From \cref{thm:pglod_convergence}, we see that the coarse-scale behavior (in the $L^2$-sense) is captured by $\uhkm \in V_H$ and the correctors are only required for the $H^1$-accuracy.
However, in many multiscale applications, the $L^2$-behavior is already sufficient; see also the discussion in \cite{elf}.
To align with this, we employ the following structural assumption on $\J$. %, which agrees with what is usually expected in a multiscale setting.
\begin{assumption}[$\J$ is a coarse functional] \label{asmpt:coarse_J}
	We assume that the objective functional is a coarse functional, measuring in the $L^2$-sense, i.e.~for all $u_H \in V_H$ and $u^\text{f} \in \Vfh$, we have $$\J(u_H + u^\text{f}, \mu) = \J(u_H, \mu).$$ 
\end{assumption}

\textbf{Approximation of the Gradient.}
For our optimization method, we require the gradient of $\Jhat_h^{\text{loc}}$.
As discussed in \cref{sec:problem}, we use the adjoint variable, which we also compute with the PG--LOD.
While \eqref{eq:PG_LOD} works as a replacement for \eqref{P.state}, we formulate a corresponding PG--LOD version of the dual problem for \eqref{eq:dual_solution}:
Seek a function $\phkmsm \in V_{H,\kl,\mu}^{\text{ms}}$ such that
\begin{equation}
\label{eq:PG_LOD_dual}
a_{\mu}(v_H, \phkmsm) =  \partial_u \J(\uhkm, \mu)[v_H] \qquad \text{for all } \; v_H \in V_H.
\end{equation}
Note that \cref{asmpt:coarse_J} justifies that $\uhkm$  (and not $\uhkmsm$) is used for the right-hand side of \eqref{eq:PG_LOD_dual}.
From \eqref{eq:PG_LOD_dual}, we conclude that, just as the FOM in \cite{paper1}, the localized FOM is a conforming choice in the sense that~$\uhkmsm$ and~$\phkmsm$ belong to the same space~$V_{H,\kl,\mu}^{\text{ms}}$.
This choice only makes sense if the given multiscale coefficient $A_\mu$ is symmetric, as we have assumed throughout this article.
In that case, the recaptured multiscale effects for the primal and dual operators are the same.
If instead $A_\mu$ is not symmetric, different LOD spaces must be constructed, which we do not consider.

Finally, we compute the gradient information with the following formula:
\begin{equation} \label{eq:LOD_gradient}
	\nabla_\mu \Jhat_h^{\text{loc}}(\mu) = \partial_\mu \J(\uhkm, \mu) +  \partial_\mu r_{\mu}^\pr(\uhkm)[\phkm].
\end{equation}
We emphasize again that we do not use $\uhkmsm \in V_{H,\kl,\mu}^{\text{ms}}$ and $\phkmsm \in V_{H,\kl,\mu}^{\text{ms}}$ but instead their coarse-scale representations $\uhkm \in V_H$ and $\phkm \in V_H$ to be able to discard corrector information directly after their computation.
Note that we could still plugin $\uhkmsm  \in V_{H,\kl,\mu}^{\text{ms}}$ at some places in \eqref{eq:LOD_gradient}, e.g., for the linear terms of~$\J$ since the related terms can be prepared simultaneously to the assembly of $\Ktmu$.

To keep the theory short, we do not consider Hessian information in the localized approach but note that using Newton's method as in \cite{paper2} is straightforward.

\subsection{Localized reduced-order model using the TSRBLOD}
\label{sec:TRLRB_with_TSRBLOD}

To derive an online efficient reduced-order model for the PG--LOD, we recall the TSRBLOD recently introduced in \cite{tsrblod}.
The {TS\-RBLOD} can be divided into two reduction processes.
In Stage~1, RB models for the corrector problems are constructed, and in Stage~2, these RB correctors are combined to a reduced two-scale formulation to reduce the global LOD scheme to a single reduced model.
The idea of reducing the corrector problems similar to Stage~1 has already been proposed as RBLOD in~\cite{RBLOD}.
While in \cite{tsrblod}, the TSRBLOD showed to be more beneficial in terms of online efficiency, the additional coarse-scale reduction introduces a different approximation error.
However, the additional error of the TSRBLOD can rigorously be controlled. 
As demonstrated in \cite{tsrblod}, for large coarse systems, the online-acceleration can be multiple orders of magnitude.
On the other hand, the offline cost of the TSRBLOD is higher than the RBLOD since an additional offline-online decomposition is to be performed.
To avoid an overload of methods, in this paper, we only consider the TSRBLOD for the relaxed TR method but mention that the same ideas can immediately be transferred to the RBLOD from \cite{RBLOD}.\\[-.5em]

\textbf{Two-scale formulation of the PG--LOD.}
To relate \eqref{eq:PG_LOD} to the two-scale-based view on the PG--LOD as used in~\cite{tsrblod}, we further note that there exists a uniquely defined two-scale representation $\utsm \in \Vts$ of~$\uhkmsm \in V_{H,\kl,\mu}^{\text{ms}}$, in the two-scale space
$$ \Vts := V_H \oplus \Vfhkt[T_1] \oplus \cdots \oplus \Vfhkt[\Tlast].$$
For $\uts = (u_H, \uft[T_1], \dots, \uft[\Tlast]) \in \Vts$ we define the corresponding two-scale $H^1$-norm of~$\uts$ by
\begin{equation*}
	\Snorm{\uts}^2 := \snorm{u_H}^2 + \sumT \snorm*{\uft}^2.
\end{equation*}
The two-scale approximation $\utsm \in \Vts$ is the solution of 
\begin{equation}
	\Btsm\left(\utsm,  \vts \right) = \Fts_\mu(\vts) \qquad \text{for all } \vts \in \mathcal{V}, \label{eq:two_scale_TRLRB}
\end{equation}
where we define the two-scale bilinear form $\Btsm \in \text{Bil}(\Vts)$ given by
\begin{multline*}
	\Btsm\left((u_H, \ufti{1}, \dots, \ufti{{\abs{\mathcal{T}_H}}}),  (v_H, \vfti{1}, \dots, \vfti{{\abs{\mathcal{T}_H}}})\right):=
	a_{\mu}(u_H - \sumT \uft, v_H) + \rho^{1/2}\sumT a_{\mu}(\uft, \vft) - a_{\mu}^T(u_H, \vft),
\end{multline*}
with a stabilization parameter $\rho \geq 1$; cf. \cite{tsrblod}. Further, let $\Fts_\mu \in \Vts^\prime$ be given as
\begin{align*}
	\Fts_\mu\left((v_H, \vfti{1}, \dots, \vfti{{\abs{\mathcal{T}_H}}})\right) &:= l_\mu(v_H).
\end{align*}
As proven in \cite{tsrblod}, the two-scale solution $\utsm \in \Vts$ can always be constructed from~\eqref{eq:PG_LOD} and the respective fine-scale correctors, such that
\begin{equation}\label{eq:two_scale_solution}
	\utsm = \left[\uhkm,\, \QQktm[T_1](\uhkm),\, \ldots,\, \QQktm[\Tlast](\uhkm)\right].
\end{equation}
Similarly, with  
\begin{align} \label{eq:dual_two_scale_rhs}
	\Fts^\du_{\mu,\bullet}\left((v_H, \vfti{1}, \dots, \vfti{{\abs{\mathcal{T}_H}}})\right) := \partial_u \J(\bullet, \mu)[v_H],
\end{align}
we can reformulate the dual system \eqref{eq:PG_LOD_dual} by solving for the two-scale dual solution $\ptsm \in \Vts$ of
\begin{equation}
	\Btsm\left(\ptsm,  \vts \right) = \Fts^\du_{\mu,\uhkm}(\vts) \qquad \text{for all } \vts \in \mathcal{V}, \label{eq:two_scale_dual_TRLRB}
\end{equation}
where we note that we did not flip the arguments due to the symmetry of the bilinear form $a_\mu$.\\

\textbf{Stage 1 of TSRBLOD.}
In Stage~1 of the TSRBLOD reduction process, we construct reduced spaces for the corrector problems \eqref{eq:loc_cor_problems} for each~$T$, parameterized towards the respective FE shape functions on $T$.
Assuming such respective Stage~1 spaces $\Vfrbkt[T]$ to be given, we form a reduced two-scale space
$$\Vtsrblod := V_H \oplus \Vfrbkt[T_1] \oplus \dots \oplus \Vfrbkt[T_{\Tlast}] \subset \Vts$$
which can be used to consider an RBLOD-type version of the two-scale equations \eqref{eq:two_scale_TRLRB} and \eqref{eq:two_scale_dual_TRLRB}, where the respective solutions can be obtained by using the RBLOD.\\

\textbf{Stage 2 of TSRBLOD.}
For an online efficient reduced model, loops over the coarse mesh should be avoided in the online phase. For this reason, in Stage~2 of the two-scale reduction, we construct a reduced basis of $\Vtsrblod$.

Since the primal and dual equations \eqref{eq:two_scale_TRLRB} and \eqref{eq:two_scale_dual_TRLRB} have different right-hand sides, we require two two-scale reduced spaces $\Vtsrbpr, \Vtsrbdu \subset \Vtsrblod$.
Reducing the primal equation \eqref{eq:two_scale_TRLRB}, given $\Vtsrbpr$, means to compute the two-scale reduced primal solution $\utsmrb \in \Vtsrbpr$ by
\begin{equation}\label{eq:tsrblod_primal}
\utsmrb := \argmin_{\uts \in \Vtsrbpr}\, \sup_{\vts \in \Vts} \frac{\Fts_\mu(\vts) - \Btsm(\uts, \vts)}{\Snorm{\vts}}.
\end{equation}
Further, let $\uhkmrb \in V_H$ denote the resulting TSRBLOD coarse-scale approximation, which can be reconstructed from $\utsmrb$, just by using the $V_H$-part of the respective basis of $\Vtsrbpr$.
Then, we define the corresponding reduced functional by
\begin{equation}\label{eq:TSRBLOD_func}
	\Jhat_\red^{\text{loc}}(\mu) := \J(\uhkmrb, \mu).
\end{equation}
Given the dual two-scale reduced space $\Vtsrbdu$, the reduced dual problem \eqref{eq:two_scale_dual_TRLRB} can be defined analogously, with the vital difference that the right-hand side of the Stage~2 FOM system needs to be adjusted with the one from the dual problem \eqref{eq:two_scale_dual_TRLRB}.
By replacing $\Fts_\mu$  in~\eqref{eq:tsrblod_primal} by $\Fts^\du_{\mu,\uhkmrb}$ from~\eqref{eq:dual_two_scale_rhs} and using $\Vtsrbdu$ instead, we obtain the two-scale dual solution $\ptsmrb \in \Vtsrbdu$ by
\begin{equation}\label{eq:tsrblod_dual}
\ptsmrb := \argmin_{\pts \in \Vtsrbdu} \,\sup_{\vts \in \Vts} \frac{\Fts^\du_{\mu,\uhkmrb}(\vts) - \Btsm(\pts, \vts)}{\Snorm{\vts}},
\end{equation}
which again uses the symmetry of $a_\mu$.
With the resulting coarse approximation $\phkmrb \in V_H$ reconstructed from~$\ptsmrb$, we can compute the reduced gradient as
\begin{equation} \label{eq:TSRBLOD_gradient}
	\nabla_\mu \Jhat_\red^{\text{loc}}(\mu) = \partial_\mu \J(\uhkmrb, \mu) +  \partial_\mu r_{\mu}^\pr(\uhkmrb)[\phkmrb].
\end{equation}

\begin{remark}[Generalization of the TSRBLOD approach]\label{rem:TSRBLOD_has_mu_rhs}
	We emphasize that the TSRBLOD approach in \cite{tsrblod} did not consider a parameterized right-hand side or an output functional.
	We still omit a further technical description for brevity, noting that an efficient online system can still be observed.
\end{remark}

\subsection{A posteriori error estimate for the reduced functional}
\label{sec:a_posteriori_estimates}

For the localized FOM and ROM approximation schemes, we aim at deriving the error estimator~$\Delta_{\Jhat_r^{\text{loc}}}$ of the reduced functional, which is needed for characterizing the TR in~\eqref{eq:rel_TRRBsubprob}.
In the sequel, we restrict our theoretical findings to the linear-quadratic case of $\J$.

\begin{assumption} \label{asmpt:J}
	We assume that $\J$ can be decomposed into a parameter function $\Theta \colon \Params \to \mathbb{R}$, and a (parameter dependent) linear and bilinear term $j_\mu: V \to \R$ and $k_\mu: V \times V \to \R$ that are (bi-)linear (and symmetric) for every parameter $\mu \in \Params$, such that
	\begin{equation*}
		\J(u, \mu) = \Theta(\mu) + j_\mu(u) + k_\mu(u, u).
	\end{equation*}
\end{assumption}

In what follows, we transfer the a posteriori result from \cite{tsrblod} to the two-scale formulations of the primal and dual systems.
On top of that, similar to the a posteriori result in \cite{paper1,QGVW2017}, we combine a primal and dual estimate to obtain an estimator for the reduced functional.
To this end, we use the following norms to assess the approximation quality of the two-scale approach:
\begin{align*}
\Anorm{\uts}^2 &:= 
\anorm{u_H - \sumT \uft}^2
+ \rho\sumT \anorm{\QQktm(u_H) - \uft}^2, \quad
\SMnorm{\uts}^2 &:= 
\snorm{u_H}^2
+ \rho\sumT \snorm{\QQktm(u_H) - \uft}^2.
\end{align*}

\begin{proposition}[Upper bound on the local primal model reduction error] \label{prop:primal_rom_error_LOD}
	For $\mu \in \Params$,  let $\utsm \in \Vts$ be the solution of~\eqref{eq:two_scale_TRLRB}.
	Let $\utsmrb \in \Vtsrb$ be the two-scale reduced solution of~\eqref{eq:tsrblod_primal}.
	Then, it holds
	\begin{align}
		\Anorm{\utsm - \utsmrb} \leq \Delta_{\pr}^{\text{rb}}(\mu) := \eta_{a,\mu}(\utsmrb),
	\ \text{ with } \quad
	\label{eq:stage2_est_a}
		\eta^\pr_{a,\mu}(\uts) &:= 
		\sqrt{5}(\CPG)^{-1}
		\sup_{v\in\Vts}
		\frac{\Fts_\mu(\vts) - \Btsm(\uts,\vts)}
		{\Snorm{\vts}}.
	\end{align}
%	\end{enumerate}
\end{proposition}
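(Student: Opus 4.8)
The plan is to split the argument into a routine consistency identity and a single inf-sup stability estimate for the two-scale form $\Btsm$, the latter being the parameterized primal analogue of the a posteriori result established in \cite{tsrblod}. Set $e := \utsm - \utsmrb \in \Vts$. Since $\utsm$ solves \eqref{eq:two_scale_TRLRB}, we have $\Fts_\mu(\vts) = \Btsm(\utsm, \vts)$ for all $\vts \in \Vts$, whence $\Fts_\mu(\vts) - \Btsm(\utsmrb, \vts) = \Btsm(e, \vts)$. Therefore
\[
\eta^\pr_{a,\mu}(\utsmrb) = \sqrt{5}\,(\CPG)^{-1} \sup_{\vts \in \Vts} \frac{\Btsm(e, \vts)}{\Snorm{\vts}},
\]
and the claim $\Anorm{e} \leq \Delta_\pr^{\text{rb}}(\mu) = \eta^\pr_{a,\mu}(\utsmrb)$ reduces to the stability bound $\CPG\,\Anorm{\uts} \leq \sqrt{5}\,\sup_{\vts \in \Vts} \Btsm(\uts, \vts)/\Snorm{\vts}$, which I would prove for arbitrary $\uts \in \Vts$ and then apply to $\uts = e$.

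For the stability bound I would exploit the block structure of $\Btsm$ and of the trial norm. Writing $\uts = (u_H, \uft[T_1], \dots, \uft[\Tlast])$, set $w := u_H - \sumT \uft$ and $d_T := \QQktm(u_H) - \uft$, so that $\Anorm{\uts}^2 = \anorm{w}^2 + \rho\sumT\anorm{d_T}^2$. The coarse contribution is handled by testing against $(v_H, 0, \dots, 0)$, for which $\Btsm(\uts, (v_H, 0, \dots, 0)) = a_\mu(w, v_H)$; decomposing $w = (u_H - \QQkm(u_H)) + \sumT d_T$ into its genuine LOD part and the accumulated fine defects, the inf-sup constant $\CPG$ -- which by construction already bridges the energy norm $\anorm{\cdot}$ and the seminorm $\snorm{\cdot}$, so that no contrast factor $\kappa$ enters -- controls the LOD part from below. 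The fine contributions are handled by testing against $(0, \dots, d_T, \dots, 0)$ and using that the off-diagonal blocks of $\Btsm$ encode the localized corrector problem, i.e. $a_\mu^T(u_H, \vf) = a_\mu(\QQktm(u_H), \vf)$ for all $\vf \in \Vfhkt$; here the stabilization factor $\rho^{1/2}\ge 1$ in $\Btsm$ is precisely what turns this block into a coercive contribution comparable to $\rho\sumT\anorm{d_T}^2$. Taking an appropriate linear combination of the two test functions and estimating the cross terms (both the coupling of $\sumT d_T$ to the coarse supremiser and the off-diagonal corrector terms) by Cauchy--Schwarz and Young's inequality yields the desired lower bound, with the numerical constant collapsing to $\sqrt{5}$.

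I expect the main obstacle to be the sharp bookkeeping in this last combination: one must verify that the denominator $\Snorm{\vts}$ of the constructed test function is controlled by the two natural pieces $\snorm{v_H}$ and $(\rho\sumT\anorm{d_T}^2)^{1/2}$, and that the repeated use of Young's inequality balances the cross terms so as to reproduce exactly the factor $\sqrt{5}$ rather than some $\rho$- or $\kappa$-dependent constant. Because the genuine two-scale solution $\utsm$ has the explicit structure \eqref{eq:two_scale_solution}, its fine components are the exact correctors and its own defects vanish, so no additional structural input on $e$ is needed beyond the general inf-sup estimate; specialising that estimate to $\uts = e$ and inserting the consistency identity from the first paragraph finishes the proof.
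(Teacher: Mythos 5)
Your argument follows the same route as the paper: the paper's proof is a one-line appeal to the inf-sup stability of the two-scale form established in the cited TSRBLOD work, and your first paragraph --- the consistency identity $\Fts_\mu(\vts)-\Btsm(\utsmrb,\vts)=\Btsm(\utsm-\utsmrb,\vts)$ reducing the claim to the stability bound $\CPG\,\Anorm{\uts}\leq\sqrt{5}\,\sup_{\vts\in\Vts}\Btsm(\uts,\vts)/\Snorm{\vts}$ --- is exactly that reduction. Your subsequent sketch of the stability bound itself (block test functions, the corrector identity for the off-diagonal blocks, Young's inequality for the cross terms) reproduces the construction of the reference rather than anything the paper proves; since the paper simply imports that result, your acknowledged uncertainty about the bookkeeping that produces the precise factor $\sqrt{5}$ is not a gap relative to the paper's own proof.
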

\begin{proof}
	The assertion follows directly from \cite{tsrblod}, using the inf-sup stability of the two-scale equation, see~\eqref{eq:infsup_constant}.
\end{proof}

\begin{remark}[Equivalence of the two-scale norms] \label{rmk:norm_equivalence_LOD}
	Due to the definitions of $\Anorm{\cdot}$, $\Snorm{\cdot}$, $\anorm{\cdot}$, and $\snorm{\cdot}$ and the  equivalences of $\Anorm{\cdot}$ and $\Snorm{\cdot}$, as well as $\anorm{\cdot}$, and $\snorm{\cdot}$, respectively, we note that the fine-scale errors $\anorm{\uhkmsm - \uhkmsmrb}$, $\snorm{\uhkmsm - \uhkmsmrb}$, and the coarse-scale errors $\anorm{\uhkm - \uhkmrb}$, and~$\snorm{\uhkm - \uhkmrb}$ can be bounded by $\Delta_{\pr}^{\text{rb}}$ with the respective equivalence constants, cf.~\cite{tsrblod}.
\end{remark}

Next, we derive corresponding dual estimates that account for the fact that the right-hand side contains the reduced primal solution instead of the true LOD solution.

\begin{proposition}[Upper bound on the local dual model reduction error]
\label{prop:dual_rom_error_LOD}
	For $\mu \in \Params$, let $\ptsm \in \Vts$ be the solution of the two-scale dual equation \eqref{eq:two_scale_dual_TRLRB}.
	Let $\ptsmrb \in \Vtsrb$ be the two-scale reduced dual solution of~\eqref{eq:tsrblod_dual}.
	Then, it holds
	\begin{align}
	\Anorm{\ptsm - \ptsmrb} \leq \Delta_{\du}^{\text{rb}}(\mu) := \frac{\sqrt{5}}{\CPG} \left(2 \cont{k_\mu}\Delta_{\pr}^{\text{rb}}(\mu)   + \eta^\du_{a,\mu}(\ptsmrb)\right),
	\end{align}
	where $\cont{k_\mu}$ denotes the continuity constant of $k_\mu$ and $\eta^\du_{a,\mu}$ is analogously defined as $\eta^\pr_{a,\mu}$ associated with \eqref{eq:two_scale_dual_TRLRB}, i.e.
	\begin{equation}
		\eta^\du_{a,\mu}(\pts) := 
	\sqrt{5}(\CPG)^{-1}
	\sup_{v\in\Vts}
	\frac{\Fts^\du_{\mu,\uhkm}(\vts) - \Btsm(\pts,\vts)}
	{\Snorm{\vts}}
	\end{equation}
\end{proposition}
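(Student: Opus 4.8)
The plan is to reduce the dual error to two well-understood contributions by inserting an auxiliary intermediate solution, mirroring the primal analysis of \cref{prop:primal_rom_error_LOD} but accounting for the fact that the dual right-hand side \eqref{eq:dual_two_scale_rhs_rb} is evaluated at the \emph{reduced} coarse primal approximation $\uhkmrb$ rather than at the true coarse solution $\uhkm$ that enters \eqref{eq:dual_two_scale_rhs}. First I would introduce the auxiliary two-scale dual solution $\widehat{\pts}_\mu \in \Vts$, defined as the FOM two-scale dual problem driven by the \emph{reduced} right-hand side, i.e.\ $\Btsm(\widehat{\pts}_\mu, \vts) = \Fts^\du_{\mu,\uhkmrb}(\vts)$ for all $\vts \in \Vts$, and then split the error via the triangle inequality
\begin{equation*}
\Anorm{\ptsm - \ptsmrb} \leq \Anorm{\ptsm - \widehat{\pts}_\mu} + \Anorm{\widehat{\pts}_\mu - \ptsmrb}.
\end{equation*}
The first term isolates the perturbation of the right-hand side caused by the primal reduction, while the second is a pure model-reduction error with a \emph{fixed} right-hand side.

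For the first term, both $\ptsm$ and $\widehat{\pts}_\mu$ solve the FOM two-scale dual equation, differing only through the right-hand sides \eqref{eq:dual_two_scale_rhs} and \eqref{eq:dual_two_scale_rhs_rb}. Since $\partial_u\J(\cdot,\mu)[v_H] = j_\mu(v_H) + 2k_\mu(v_H, \cdot)$ is affine in its state argument (recall \eqref{eq:dual_solution}), the difference satisfies
\begin{equation*}
\Btsm\bigl(\ptsm - \widehat{\pts}_\mu,\, \vts\bigr) = \Fts^\du_{\mu,\uhkm}(\vts) - \Fts^\du_{\mu,\uhkmrb}(\vts) = 2\,k_\mu(v_H,\, \uhkm - \uhkmrb) \qquad \text{for all } \vts \in \Vts,
\end{equation*}
where only the coarse part $v_H$ of $\vts$ enters, consistent with \cref{asmpt:coarse_J}. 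Using continuity of $k_\mu$ together with the elementary bound $\snorm{v_H} \leq \Snorm{\vts}$, I would estimate $\sup_{\vts \in \Vts} \Btsm(\ptsm - \widehat{\pts}_\mu, \vts)/\Snorm{\vts} \leq 2\cont{k_\mu}\snorm{\uhkm - \uhkmrb}$, and then invoke the two-scale inf-sup stability of $\Btsm$ (with constant $\gamma_k = \CPG$) exactly as in \cref{prop:primal_rom_error_LOD} to convert this residual into an $\Anorm{\cdot}$-error bound carrying the prefactor $\sqrt{5}/\gamma_k$. Finally, the coarse-scale primal error $\snorm{\uhkm - \uhkmrb}$ is controlled by $\Delta_{\pr}^{\text{rb}}(\mu)$ through \cref{rmk:norm_equivalence_LOD} (and the equivalence of $\snorm{\cdot}$ and $\anorm{\cdot}$), giving $\Anorm{\ptsm - \widehat{\pts}_\mu} \leq \tfrac{\sqrt{5}}{\gamma_k}\, 2\cont{k_\mu}\Delta_{\pr}^{\text{rb}}(\mu)$.

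For the second term, $\widehat{\pts}_\mu$ and $\ptsmrb$ solve the FOM and ROM versions of the \emph{same} dual problem (identical right-hand side $\Fts^\du_{\mu,\uhkmrb}$, posed over $\Vts$ and $\Vtsrbdu$, respectively). This is a pure model-reduction error structurally identical to the primal situation, so the same inf-sup/norm-equivalence argument applied to \eqref{eq:tsrblod_dual} bounds it by the dual residual estimator, $\Anorm{\widehat{\pts}_\mu - \ptsmrb} \leq \tfrac{\sqrt{5}}{\gamma_k}\,\eta^\du_{a,\mu}(\ptsmrb)$. Adding the two contributions then yields precisely the claimed $\Delta_{\du}^{\text{rb}}(\mu)$. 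The main obstacle is the constant bookkeeping: one must confirm that the two-scale inf-sup stability established in \cite{tsrblod} delivers the same $\sqrt{5}/\gamma_k$ prefactor for the perturbed FOM dual problem as for the primal, and carefully track which scale of the primal error ($\anorm{\cdot}$ versus $\snorm{\cdot}$, coarse versus fine) is produced by \cref{rmk:norm_equivalence_LOD}, so that the factor $2$ and the continuity constant $\cont{k_\mu}$ land in the right places. A related subtlety is notational: one must read the parenthesisation of $\Delta_{\du}^{\text{rb}}(\mu)$ consistently with the convention for $\eta^\du_{a,\mu}$, i.e.\ interpreting it here as the raw two-scale dual residual (without the stability prefactor), so that both contributions share the single outer factor $\sqrt{5}/\gamma_k$; the remaining steps are routine given \cref{prop:primal_rom_error_LOD}.
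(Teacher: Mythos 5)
Your proof is correct and follows essentially the same route as the paper: the paper performs your add-and-subtract of $\Fts^\du_{\mu,\uhkmrb}$ directly inside the residual after a single application of the two-scale inf-sup stability, rather than introducing the auxiliary solution $\widehat{\pts}_\mu$ and a triangle inequality, but the ingredients (inf-sup stability, linearity of the dual right-hand side in its state argument, continuity of $k_\mu$, \cref{prop:primal_rom_error_LOD} and \cref{rmk:norm_equivalence_LOD}) and the resulting constants are identical. Your closing remark that $\eta^\du_{a,\mu}$ inside the parenthesis must be read as the raw two-scale residual without the stability prefactor is also consistent with what the paper's own proof implicitly does.
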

\begin{proof}
	We use the shorthands $\mathfrak{e}_{\mu}^{\du} := \ptsm - \ptsmrb \in \Vtsrb$ and $e_{H, \mu}^{\pr} := \uhkm - \uhkmrb$, where $\uhkm \in V_H$ and~$\uhkmrb$ are the $V_H$ parts of $\utsm$ and $\utsmrb$, respectively.
	With the inf-sup stability constant from \cref{eq:infsup_constant}, we have
	\begin{align*}
	\frac{\CPG}{\sqrt{5}} \, \Anorm{\mathfrak{e}_{\mu}^{\du}} &\leq \sup_{0 \neq \vts \in \Vts}
	\frac{\Btsm(\mathfrak{e}_{\mu}^{\du},\vts)}{\Snorm{\vts}}  = 
	\sup_{0 \neq \vts \in \Vts} \left(
	\frac{\Fts^\du_{\uhkm}(\vts)}{\Snorm{\vts}} - \frac{\Btsm(\ptsmrb,\vts)}{\Snorm{\vts}}\right) \\
	&= 	 \sup_{0 \neq \vts \in \Vts} \left(
	\frac{\Fts^\du_{\uhkm}(\vts)}{\Snorm{\vts}} - \frac{\Fts^\du_{\uhkmrb}(\vts)}{\Snorm{\vts}} + \frac{\Fts^\du_{\uhkmrb}(\vts)}{\Snorm{\vts}} - \frac{\Btsm(\ptsmrb,\vts)}{\Snorm{\vts}}\right) 	
	\leq 2 \|k_\mu\|\; \|e_{H, \mu}^\pr\|\; + \eta^\du_{a,\mu}(\ptsmrb),
	\end{align*}
	with $\Fts^\du_{\mu,\bullet}$ defined in~\eqref{eq:dual_two_scale_rhs}, which is linear in its sub-index argument due to the definition of $\J$.
	In the last inequality we used that $ \partial_u \J(\bullet, \mu)[v_H] = j_\mu(v_H) + 2k_\mu(v_H, \bullet)$ for the linear-quadratic case.
	We attain the desired result utilizing \cref{prop:primal_rom_error_LOD} and \cref{rmk:norm_equivalence_LOD}.
\end{proof}

Similar to \cref{rmk:norm_equivalence_LOD}, the respective dual estimators can also bound the corresponding dual norms from \cref{prop:dual_rom_error_LOD}.
Finally, we derive the a posteriori error result for the reduced objective functional.

\begin{proposition}[Upper bound for the reduced functionals]
	\label{prop:LOD_reduced_functional}
	For $\mu \in \Params$ let $\utsm \in \Vts$ be the two-scale solution of \eqref{eq:two_scale_TRLRB} with coarse part $\uhkm \in V_H$ and LOD-space representation $\uhkmsm \in V_{H,\kl,\mu}^{\text{ms}}$.
	Further, let $\ptsm \in \Vts$ be the two-scale solution of \eqref{eq:two_scale_dual_TRLRB} with coarse part $\phkm \in V_H$ and LOD-space representation $\phkmsm \in V_{H,\kl,\mu}^{\text{ms}}$.
	\begin{enumerate}
		\item [\emph{(i)}] We have for the TSRBLOD reduced cost functional
		\begin{align*}
		\hspace{-0.5cm} |\Jhat_h^{\text{loc}}(\mu) - \Jhat_\red^{\text{loc}}(\mu)| \leq \Delta_{\Jhat_\red^{\text{loc}}}(\mu) :=  \Delta_{\pr}^{\text{rb}}(\mu) &\eta^\du_{a,\mu}(\ptsmrb) 
		+ (\Delta_{\pr}^{\text{rb}}(\mu))^2 \cont{\kformd}  + \Delta_{\text{trunc}}^{\text{rb}}(\mu),
		\end{align*}
		where $\ptsmrb \in \Vtsrbdu$ denotes the two-scale reduced dual equation and
		$\Delta_{\text{trunc}}^{\text{rb}}(\mu)$ is a truncation-reduction-based homogenization term which is specified below
		\item[(ii)] The truncation-reduction-based homogenization term $\Delta_{\text{trunc}}^{\text{rb}}(\mu)$ is defined as 
		\begin{equation} \label{eq:hom_red_term}
		 \Delta_{\text{trunc}}^{\text{rb}}(\mu) := a_\mu(\ehms,\QQkmrb(\phkmrb)) 
		\end{equation}
		and can be estimated by
		\begin{equation} \label{eq:estimation_hom_red_term}
			\hspace{-0.5cm}\abs{\Delta_{\text{trunc}}^{\text{rb}}(\mu)} \leq \Delta_{\pr}^{\text{rb}}(\mu) \left(c \, \kl^{d/2} \theta^\ell \snorm{\phkmrb} + \eta^\du_{a, \mu}(\ptsmrb)\right) + \alpha^{-1/2}\snorm{\phkmrb} \eta^\pr_{a, \mu}(\utsmrb),
		\end{equation}
		with respective coarse- and two-scale-space primal and dual solutions and constant $c>0$.
	\end{enumerate}
\end{proposition}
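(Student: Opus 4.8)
The plan is to adapt the classical primal--dual output error estimate for linear--quadratic functionals to the PG--LOD/TSRBLOD setting, carefully tracking the extra terms produced by the localized correctors. Throughout I write $e^\pr_H := \uhkm - \uhkmrb \in V_H$ for the coarse primal error, $\ehms := \uhkmsm - \uhkmsmrb$ for the multiscale primal error (so $e^\pr_H$ is the coarse part of $\ehms$), and $\mathfrak{e}^\pr := \utsm - \utsmrb$ for its two-scale lift. Since $\J(u,\mu) = \Theta(\mu) + j_\mu(u) + k_\mu(u,u)$ is linear--quadratic and, by \cref{asmpt:coarse_J}, depends only on coarse parts, expanding around the reduced coarse solution gives the exact identity
$$\Jhat_h^{\text{loc}}(\mu) - \Jhat_\red^{\text{loc}}(\mu) = \partial_u\J(\uhkmrb,\mu)[e^\pr_H] + k_\mu(e^\pr_H, e^\pr_H).$$
The quadratic remainder is immediately controlled by continuity of $k_\mu$ and \cref{rmk:norm_equivalence_LOD}, namely $|k_\mu(e^\pr_H,e^\pr_H)| \leq \cont{\kformd}\anorm{e^\pr_H}^2 \leq \cont{\kformd}(\Delta_{\pr}^{\text{rb}}(\mu))^2$, which yields the second summand; it remains to treat the linear term.

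Next I would invoke the adjoint through the \emph{reduced} dual, so that the residual estimators appear rather than inaccessible error quantities. Writing the reduced two-scale dual residual as $r^\du(\vts) := \Fts^\du_{\mu,\uhkmrb}(\vts) - \Btsm(\ptsmrb,\vts)$ and testing it against $\mathfrak{e}^\pr$ yields $\partial_u\J(\uhkmrb,\mu)[e^\pr_H] = r^\du(\mathfrak{e}^\pr) + \Btsm(\ptsmrb,\mathfrak{e}^\pr)$, because $\Fts^\du_{\mu,\uhkmrb}$ sees only the coarse part $e^\pr_H$ of $\mathfrak{e}^\pr$. The residual piece obeys $|r^\du(\mathfrak{e}^\pr)| \lesssim \eta^\du_{a,\mu}(\ptsmrb)\,\Delta_{\pr}^{\text{rb}}(\mu)$ by the definition of $\eta^\du_{a,\mu}$ together with \cref{prop:primal_rom_error_LOD,rmk:norm_equivalence_LOD}; this is the first summand. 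For the bilinear term $\Btsm(\ptsmrb,\mathfrak{e}^\pr)$ I would return to the PG--LOD picture via \eqref{eq:two_scale_solution}, rewriting it as an $a_\mu$-pairing of the reduced dual LOD function $\phkmsmrb = \phkmrb - \QQkmrb(\phkmrb)$ with $\ehms$. Its coarse component $a_\mu(\,\cdot\,,\phkmrb)$ is absorbed using the PG--LOD primal equation \eqref{eq:PG_LOD} and the $a_\mu$-orthogonal splitting $V_h = V_{H,\mu}^{\text{ms}}\oplus_{a_\mu}\Vfh$, leaving exactly the fine-scale corrector contribution $\Delta_{\text{trunc}}^{\text{rb}}(\mu) = a_\mu(\ehms,\QQkmrb(\phkmrb))$. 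Collecting these pieces with the quadratic remainder establishes part (i).

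For part (ii) I would estimate $\Delta_{\text{trunc}}^{\text{rb}}(\mu)$ on its own. Since $\QQkmrb(\phkmrb)$ is a fine-scale function, testing the primal error against it is almost annihilated by corrector orthogonality: I would add and subtract the ideal corrector, exploit that $a_\mu(\uhkmsm,\,\cdot\,)$ behaves like the corrector equation on the localized fine space, and split the defect into three parts --- the localization/truncation defect, bounded by the exponential decay $\theta^k\kl^{d/2}$ of \cref{thm:pglod_convergence} (giving the $2c\,\kl^{d/2}\theta^k\snorm{\phkmrb}$ factor); the reduced-corrector defect, bounded by the dual residual $\eta^\du_{a,\mu}(\ptsmrb)$; and the primal-residual contribution, bounded by $\eta^\pr_{a,\mu}(\utsmrb)$ weighted by $\anorm{\QQkmrb(\phkmrb)}$, which is controlled by $\alpha^{-1/2}\snorm{\phkmrb}$ through Cauchy--Schwarz and the ellipticity constant. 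Factoring out the primal error bound $\Delta_{\pr}^{\text{rb}}(\mu)$ where it appears yields the stated estimate \eqref{eq:estimation_hom_red_term}.

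The main obstacle I anticipate is precisely the corrector bookkeeping in the two bilinear terms above: unlike in the FEM-based analysis of \cite{paper1,QGVW2017}, the $a_\mu$-orthogonality between the multiscale and fine-scale spaces is exact only for the ideal global correctors and is broken both by truncation to the patches $U_\kl(T)$ and by the RB reduction of the correctors. Quantifying these two defects and showing that they are governed by the decay $\theta^k$ and by the computable two-scale residual estimators, rather than by inaccessible fine-scale quantities, is the genuinely new ingredient. A secondary technical point is that $\Btsm$ is non-symmetric, so every residual/adjoint manipulation must be performed in the correct argument slot and transferred back to the symmetric form $a_\mu$ using \eqref{eq:two_scale_solution} and the norm equivalences of \cref{rmk:norm_equivalence_LOD}.
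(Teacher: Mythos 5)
Your proposal follows essentially the same route as the paper's proof: an exact linear--quadratic expansion of $\J$ using \cref{asmpt:coarse_J}, insertion of the reduced dual residual to treat the linear term, continuity of $k_\mu$ for the quadratic remainder, isolation of the corrector contribution $a_\mu(\ehms,\QQkmrb(\phkmrb))$ as the truncation term, and, for (ii), the same three-way splitting into ideal-corrector orthogonality, the localization decay $\theta^k\kl^{d/2}$, and the RB reduction defects controlled by $\eta^\pr_{a,\mu}$ and $\eta^\du_{a,\mu}$. The only cosmetic difference is that you insert the dual residual at the two-scale level (via $\Btsm$ and $\Fts^\du$) whereas the paper works directly with the $V_h$-level residual $r_\mu^\du(\uhkmrb,\phkmsmrb)[\ehms]$ and only afterwards passes to the two-scale estimators; both variants require the same corrector bookkeeping and norm equivalences from \cref{rmk:norm_equivalence_LOD}.
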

\begin{proof}
	We utilize \cref{asmpt:coarse_J} to incorporate the estimates of \cref{prop:primal_rom_error_LOD} and \cref{prop:dual_rom_error_LOD}.
	By using the shorthands $\ehms := \uhkmsm - \uhkmsmrb$ and $\eh := \uhkm - \uhkmrb$ and the definition of $r_{\mu}^\du$ in \eqref{eq:dual_residual} with~$ \partial_u \J(\bullet, \mu)[q] = j_\mu(q) + 2k_\mu(q, \bullet)$, we have 
	\begin{align*}
		|\Jhat_h^{\text{loc}}&(\mu) - \Jhat_\red^{\text{loc}}(\mu)| = |\J(\uhkm, \mu) - \J(\uhkmrb, \mu)| \\ %= |\J(\uhkmsm, \mu) - \J(\uhkmsmrb, \mu)|
		&= |j_\mu(\ehms) + \kformd(\uhkm, \uhkm) - \kformd(\uhkmrb, \uhkmrb)
		- a_\mu(\ehms,\phkmsmrb) + a_\mu(\ehms,\phkmsmrb)| \\
		&= |r_{\mu}^\du(\uhkmrb, \phkmsmrb)[\ehms] - 2\kformd(\uhkmrb, \ehms) + \kformd(\uhkm, \uhkm) - \kformd(\uhkmrb, \uhkmrb) + a_\mu(\ehms,\phkmsmrb)| \\
		&= |r_{\mu}^\du(\uhkmrb, \phkmsmrb)[\ehms] + \kformd(\ehms, \ehms) + a_\mu(\ehms,\phkmsmrb)| \\
		&\leq \eta^\du_{a,\mu}(\ptsmrb)\; \|\ehms\| +\cont{\kformd} \; \|\ehms\|^2 
		+ |a_\mu(\ehms,\QQkmrb(\phkmrb))|,
	\end{align*}
	where we used that
	%\begin{align*}
	$a_\mu(\ehms,\phkmsmrb) = - a_\mu(\ehms,\QQkmrb(\phkmrb))$.
	%\end{align*}
	This concludes the proof for (i). For (ii), we note that	
	\begin{align*}
		a_\mu(\ehms,\QQkmrb(\phkmrb))  &= a_\mu(\ehms,\QQm(\phkmrb))  \mkern-3mu 
		- a_\mu(\ehms,(\QQm \mkern-3mu - \mkern-3mu \QQkm \mkern-3mu + \mkern-3mu\QQkm \mkern-3mu - \mkern-3mu \QQkmrb)(\phkmrb))  
	\end{align*}
	and
	\begin{align*}
		a_\mu(&\ehms,\QQm(\phkmrb))
		%=	a_\mu(\uhkmsm,\QQm(\phkmrb)) - a_\mu(\uhkmsmrb,\QQm(\phkmrb)) \\
		= a_\mu((\QQkm \mkern-3mu - \mkern-3mu \QQm)(\uhkm - \uhkmrb) + (\QQkm \mkern-3mu - \mkern-3mu \QQkmrb)(\uhkmrb),\QQm(\phkmrb)).
	\end{align*}
	We thus obtain
	\begin{align*}
		|a_\mu(\ehms,\QQkmrb(\phkmrb))| &\leq \anorm{\ehms} (\anorm{(\QQm \mkern-3mu - \mkern-3mu \QQkm)(\phkmrb)}
		+ \anorm{(\QQkm \mkern-3mu - \mkern-3mu \QQkmrb)(\phkmrb)}) \\
		& \quad + \anorm{\QQm(\phkmrb)} (\anorm{(\QQm \mkern-3mu - \mkern-3mu \QQkm)(\eh)} +
		\anorm{(\QQkm \mkern-3mu - \mkern-3mu \QQkmrb)(\uhkmrb)}) \\
		& \leq \Delta_{\pr}^{\text{rb}}(\mu) ( c_1\ell^{d/2} \theta^\ell \mkern-1mu \anorm{\phkmrb}
		+\mkern-3mu \eta^\du_{a, \mu}(\ptsmrb)) \mkern-3mu
		 +\mkern-3mu \anorm{\phkmrb}\mkern-1mu ( c_2 \ell^{d/2} \theta^\ell \anorm{\eh} \mkern-3mu+ \mkern-3mu\eta^\pr_{a, \mu}(\utsmrb)) \\
		& \leq \Delta_{\pr}^{\text{rb}}(\mu) \left((c_1 + c_2) \ell^{d/2} \theta^\ell \anorm{\phkmrb}
		+ \eta^\du_{a, \mu}(\ptsmrb)\right) + \anorm{\phkmrb} \eta^\pr_{a, \mu}(\utsmrb),
	\end{align*}
	where we have used the a priori result on the corrector decay \eqref{eq:exponential_decay} and \cref{rmk:norm_equivalence_LOD}. Using the equivalence of $\anorm{\cdot}$ and $\snorm{\cdot}$ yields the assertion.
\end{proof}

\begin{remark}[Truncation-reduction-based homogenization term] \label{rem:trunc-red-hom-term}
	In \cref{prop:LOD_reduced_functional}, we intentionally separated the error estimation from the homogenization term $\Delta_{\text{trunc}}^{rb}(\mu)$ and presented a rather naive estimation of it.
	The reason is that the term can be interpreted as a truncation term that (without reduction) vanishes for  true LOD-space functions, i.e.
	\begin{equation}
		a_\mu(v_{H,\mu}^{\text{ms}},\QQm(\phkmrb)) = 0,
	\end{equation}
	for all $v_{H,\mu}^{\text{ms}} \in V_{H,\mu}^{\text{ms}}$, since $\QQm(\phkmrb) \in \Vfh$ and $V_h = V_{H,\mu}^{\text{ms}} \oplus_{a_{\mu}} \Vfh$.
\end{remark}

The computation of the above-derived estimators can be offline-online decomposed with a numerically stable procedure, see~\cite{tsrblod} for the primal equation.
However, while the additional orthonormalization of the residual terms of Stage~1 is necessary for the Stage~2 residual, the additional expenses for stabilizing the Stage 2 residual are not strictly needed in our approach.
Indeed, concerning the overall cost of the TR-TSRBLOD algorithm, we omit the offline-online decomposition of Stage 2 entirely and instead compute the residual and its Riesz-representative whenever needed, cf.~\cref{sec:pseudo-code}.

\subsection{Local basis enrichment}
\label{sec:local_basis_enrichment}

It remains to elaborate on the adaptive localized enrichment strategy for a parameter $\mu \in \Params$, e.g., an accepted outer iterate of the TR algorithm.
In \cite{paper2}, it is discussed that the RB space can either be updated unconditionally or optionally.
For local RB models, the situation is more complex.
While we, at least for obtaining certified convergence in the sense of \cref{Thm:convergence_of_TR}, always perform an enrichment, some local models may reject or dismiss the snapshots if, e.g., the selected parameter does not influence the local model.
For this reason, a localized error that decides for a local update, known as localized online enrichment; cf.~\cite{Buh2017}.

First of all, we note that the estimators $\eta_{a, \mu}^\pr$ and $\eta_{a, \mu}^\du$ that occur in \cref{prop:LOD_reduced_functional} can indeed be boiled down to their respective local reduction errors by the construction of the two-scale bilinear form. In particular, the standard RB estimation of Stage~1 of the reduction process for the TSRBLOD can be used. 

For each $T \in \Gridh$, we may use the residual-norm based estimate
\begin{equation}\label{eq:stage1_error_estimate}
	\anorm{\QQktm(v_H) - \QQktmrb(v_H)} 
	\leq \eta_{T,\mu}(\QQktmrb(v_H)),
	%\leq \kappa^{1/2}\anorm{\QQktm(v_H) - \QQktmrb(v_H)},
\end{equation}
where
\begin{equation} \label{eq:stage_1_estimator}
	\eta_{T,\mu}(\QQktmrb(v_H)) := \alpha^{-1/2} \sup_{\vft\in\Vfhkt}
	\frac{a_\mu^T(v_H, \vft) - a_\mu(\QQktmrb(v_H), \vft)}{\snorm{\vft}},
\end{equation}
which is essentially the standard residual-based estimation of \eqref{eq:loc_cor_problems}.

In the relaxed TR scheme, at an enrichment step for a new parameter $\mu$, for every $T$, we use the Stage~1 estimator $\Delta^T_{\text{loc}}(\mu) := \eta_{T,\mu}(\QQktmrb(v_H))$ to decide for whether we enrich the local space.
We enrich the space if the estimator is larger than a tolerance $\tau_{{\text{loc}}} > 0$, relaxed for every outer iteration $k$, i.e.
\begin{equation}\label{eq:optional_enrichment}
	\Delta^T_{\text{loc}}(\mu) + \varepsilon^{(k)} > \tau_{{\text{loc}}}.
\end{equation}
For a sufficiently small $\tau_{{\text{loc}}}$ or a large relaxation $\varepsilon^{(k)}$, the enrichment strategy can be considered unconditionally.
We note that the presence of the relaxation parameter in \eqref{eq:optional_enrichment} can be justified by the fact that the outer iteration steps of the relaxed TR can be expected to be far away from each other.

We note that the online adaptive approach is also motivated by the numerical experiments in~\cite{tsrblod,RBLOD},
where it was demonstrated that moderate choices of $\tau_{{\text{loc}}}$ already produce acceptable reduced models.
However, it is clear that the choice of the tolerance $\tau_{{\text{loc}}}$ is highly problem dependent.
If the tolerance is chosen too large, the method could be stagnant (due to the missing local basis quality).
In such cases, it is recommended to refine the tolerance adaptively.
For simplicity, we omit such a strategy in this paper.

Concerning the Stage~2 reduction, we note that the TS\-RB\-LOD model is based on the reduced models from Stage~1, meaning that whenever the Stage~1 models are enriched, it is recommended to build the new Stage~2 from scratch.
Thus, there is more freedom in choosing the enrichment parameters for the TSRBLOD model.
With respect to the fact that, at iteration $k$, the Stage~1 models are exact (up to the tolerance $\tau_{{\text{loc}}}$), we propose to enrich the TSRBLOD model for the same sequence of TR iterates $\mu^{(i)}$, for $i = 0, \dots, k$.
Greedy-based enrichments of the TSRBLOD are also possible, mainly because the snapshot generation with Stage~1 is fast.
However, our experiments suggested that greedy-search algorithms do not provide significantly different results.

\subsection{TR-TSRBLOD algorithm in Pseudo-code} \label{sec:pseudo-code}
We summarize the (R)-TR algorithms based on the TSRBLOD in the following.
The relaxed TR-TSRBLOD procedure in \cref{alg:R-TR-TSRBLOD} is analog to \cref{alg:basic_TRRB} but with the specification of a localized LOD-based FOM and the localized TSRBLOD reduced model including its respective estimator is used.
Thus, no FEM-based approximations are required compared to the algorithm used in \cite{paper1}.
This makes the TR algorithm usable for a much more comprehensive range of optimization problems.
As stated in Line \ref{stop_early_rel} of \cref{alg:R-TR-TSRBLOD}, we check the FOM termination criterion prior to the enrichment.
This is because the online enrichment, including the assembly of the respective estimators, is relatively more expensive than the pure computation of the termination criterion.
Again, if $\varepsilon^{(k)} \equiv 0$, the relaxed algorithm is equivalent to the original TR algorithm.

\begin{algorithm2e}[!h]
	\KwData{Initial parameter $\mu^{(0)}$, stopping tolerance for the sub-problem $\tau_\textnormal{{sub}}\ll 1$, stopping tolerance for the FOC condition $\tau_\textnormal{{FOC}}$ with $\tau_\textnormal{{sub}}\leq\tau_\textnormal{{FOC}}\ll 1$, relaxation sequence $(\varepsilon^{(k)})_k$.}
	Set $k=0$ and initialize TSRBLOD model with $\mu^{(0)}$\; 
	\While
	{$\|\mu^{(k)}-\Proj_{\Paramsad}(\mu^{(k)}-\nabla_\mu \Jhat^{\textnormal{loc}}_h(\mu^{(k)}))\|_2 > \tau_{\textnormal{{FOC}}}$\label{stopping_condition_R}}{
		Compute $\mu^{(k+1)}$ from~\eqref{eq:rel_TRRBsubprob} with relaxed termination~\eqref{FOC_sub-problem} and \eqref{eq:rel_cut_of_TR}\label{solve_sub_problem_R}\;
		\uIf{Relaxed sufficient decrease condition~\eqref{eq:suff_decrease_condition_rel} is fulfilled with relaxation $\varepsilon^{(k)}$} {
			Accept $\mu^{(k+1)}$ and possibly enlarge the TR-radius\;
			Before TSRBLOD enrichment: If \eqref{eq:global_termination_TR} go to Line 2 for early termination\label{stop_early_rel}\;
			\textbf{Stage 1:} Enrich the local RB corrector models at $\mu^{(k+1)}$ for all $T$ that fulfill \eqref{eq:optional_enrichment}\;
			\textbf{Stage 2:} Construct the primal and dual two-scale models and enrich for all $\mu^{(k')}$, $k'=0,\dots,k+1$, and do not assemble Stage 2 estimator\;
		}
		\Else {
			Reject $\mu^{(k+1)}$, shrink the TR radius $\delta^{(k+1)} = \beta_1\delta^{(k)}$ and go to \ref{solve_sub_problem}\;		
		}
		Set $k=k+1$\;
	}
	\caption{Relaxed TR-TSRBLOD algorithm}
	\label{alg:R-TR-TSRBLOD}
\end{algorithm2e}

\section{Numerical experiments}
\label{sec:loc_experiments}

We analyze the presented (relaxed) TR-TSRBLOD approach with two experiments with the same problem description, only differing in their respective multiscale complexity.
We define the fine-mesh by $n_h \times n_h$ and the coarse-mesh by $n_H \times n_H$ quadrilateral grid-blocks of $\Omega:=[0,1]^2$, used to determine the standard FE mesh~$\mathcal{T}_h$ and $\mathcal{T}_H$, respectively, with traditional $\mathcal{P}^1$-FE spaces $V_h$ and $V_H$.
The mesh sizes $h$ and $H$ can be computed from $n_h$ and $n_H$.
In the first small experiment, we compare the localized methods to FEM-based TR methods. In the second large experiment, we neglect FEM entirely, as it is computationally infeasible.

In the following, we mainly focus on the number of evaluations relative to the complexity of the fine mesh~$\gridh$, the coarse LOD mesh-size~$H$, or the respective low RB dimensions of the reduced models.
Moreover, we provide run time comparisons that present the computational efficiency observed with our implementation.

Our computations were performed on an HPC cluster with $400$ parallel processes.
Nevertheless, the observed run times can not be interpreted as the minimal computational times of the localized algorithms.
More HPC-oriented implementations can strengthen the localized approaches even more.
We also note that the Stage~2 reduction has been implemented as a serialized process, where
neither the observed data from Stage~1 is efficiently stored, nor the sparsity pattern of the two-scale system matrix is entirely exploited.

We use the $L^2$-misfit objective functional with a Tikhonov-regularization term:
\begin{equation} \label{eq:L2_misfit}
	\J(v, \mu) = \frac{\sigma_d}{2} \int_{D}^{} (I_H(v) - u^{\text{d}})^2 \dx + \frac{1}{2} \sum^{P}_{i=1} \sigma_i (\mu_i-\mu^{\text{d}}_i)^2 + 1.
\end{equation} 
Here, $\mu^{\text{d}} \in \Params$ is the desired parameter and $u^{\text{d}} = I_H(u_{\mu^{\text{d}}})$ the corresponding desired solution specified in each experiment.
Using the interpolation operator $I_H$ ensures \cref{asmpt:coarse_J}.
Moreover, using an actual solution as desired temperature and the respective desired parameter in the objective functional ensures that the optimization problem is sufficiently regular, such that all optimization methods converge to the same point for comparison purposes.
We note that $\J$ can be written in the linear-quadratic form as in \cref{asmpt:J} by $\Theta(\mu) = \frac{1}{2} \sum^{M}_{i=1} \sigma_i (\mu_i-\mu^{\text{d}}_i)^2 + \frac{\sigma_D}{2} \int_{D}^{} u^{\text{d}} u^{\text{d}} + 1$,
% \qquad 
$j_{\mu}(u) = -\sigma_D \int_{D}^{} u^{\text{d}}u$,
% \qquad \text{and} \qquad
and
$k_{\mu}(u,v) = \frac{\sigma_D}{2} \int_{D}^{} uv$.

We consider the admissible parameter set $\Params = [1,4]^{24} \times [1,1.2]^8$.
The diffusion coefficient~$A_\mu$ in the symmetric bilinear form $a_\mu$ is considered a $4 \times 4$\ -\,thermal block with two different thermal block multiscale coefficients $A_\mu^1$ and $A_\mu^2$, i.e.
$$
 	A_\mu := A_\mu^1 + A_\mu^2 := \sum_{\xi = 1}^{16} \mu_\xi A_\xi^{1} +  \sum_{\xi =17}^{32} \mu_\xi A_{\xi-16}^{2}.
$$
Each of the $4 \times 4$ blocks is linearly dependent on an individual parameter.
The respective parameterized multiscale blocks are given by $A_\xi^{1} = A^{1}\big|_{\Omega_{i,j}}$ and $A_\xi^{2} = A^{2}\big|_{\Omega_{i,j}}$, where $\Omega_{i,j}$ denotes the $(i,j)$-th thermal block for $i,j=1,2,3,4$ enumerated by $\xi=1,\dots,16$.
The multiscale features are randomly constructed iid values in a normal distribution $\mathcal{N}([0.9,1.1])$ on a $N_1 \times N_1$ (for $A^{1}$) and $N_2 \times N_2$ (for $A^{2}$) quadrilateral grid.
The specific values for $N_1$ and $N_2$ are given for each experiment.
Hence, the multiscale data does not admit periodicity or other structural assumptions apart from the bounds; see \cref{fig:thermal_block} for a visualization of the random field (evaluated with $\mu^{\text{d}}$).
Moreover, both coefficients $A_\mu^{1}$ and $A_\mu^{2}$ have low-conductivity blocks in the middle of the domain, i.e., for $\Omega_{i,j}$, $i,j=2,3$.
The low conductivity is enforced by the choice of the parameter space~$\Params$.
We choose the non-parameterized constant function $f_\mu \equiv 10$ as the right-hand-side function.
For the inner product of $V_h$, we use the energy norm $\norm{\cdot} := \norm{\cdot}_{a,\check{\mu}}$ for a fixed parameter $\check{\mu} \in \Params$ in the middle of the parameter space. Thus, constants in the estimators can be deduced by the min/max-theta approach, cf.~\cite{paper1}.
The maximum contrast $\kappa$ and the respective constants $\alpha$ and $\beta$ can be approximated accordingly.
For the a priori constants in \eqref{eq:estimation_hom_red_term}, we enforced the (in our experiments meaningful) assumption that the dual reduction term dominates the estimate, s.t. we have $c \, \kl^{d/2} \theta^\ell \snorm{\phkmrb} < \eta^\du_{a, \mu}(\ptsmrb)$.
This can be justified by the exponentially decaying term $\theta^{\ell}$ dominating the term for large enough $\ell$.
Furthermore, the algorithm is robust concerning overestimation.

\begin{figure}
	\begin{subfigure}[b]{0.45\textwidth}
		\includegraphics[width=\textwidth]{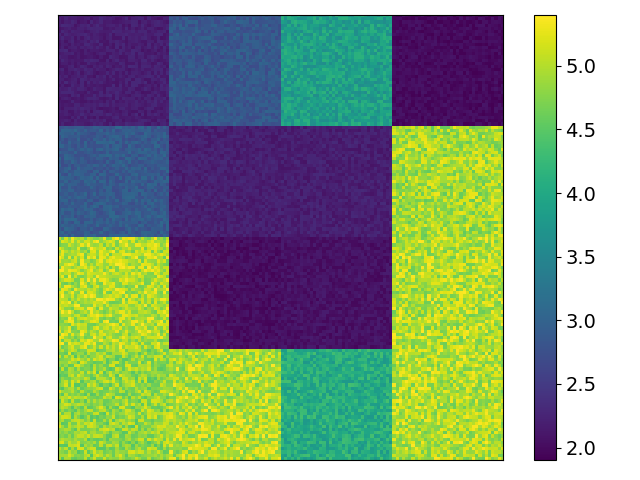}
	\end{subfigure}
	\begin{subfigure}[b]{0.45\textwidth}
		\includegraphics[width=\textwidth]{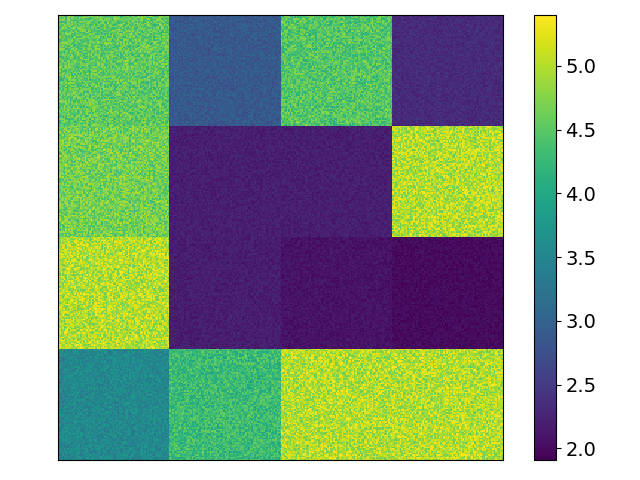}
	\end{subfigure}
	\centering
	\caption{{Coefficient $A^1_\mu$ with $N_1 = 150$ (left) and $A^2_\mu$ with $N_2 = 300$ (right) for the desired state of $\mu^{\text{d}} \in \Params$. For more details, we refer to the accompanying code \cite{CodeTRLRB}.}}
	\label{fig:thermal_block}
\end{figure}

The desired parameter~$\mu^{\text{d}} \in \Params$ is equal for both experiments and mimics the case where boundary constraints are active, i.e., we set $\mu^{\text{d}}_i = 4$ for $i=3,4,6,7,8,9,11,14$ and $\mu^{\text{d}}_i = 1.2$ for $i=28,29,30,31$.
The remaining values of $\mu^{\text{d}}$ are chosen randomly in the interior of $\Params$, see \cref{fig:thermal_block}.
The initial guess $\mu^{(0)}$ is also chosen randomly in the interior of~$\Params$. 
Furthermore, the weights for the objective functional are chosen as~$\sigma_d=100$ and $\sigma_i = 0.001$ for each $i = 1, \dots, P$.

Similar to the experiments in \cite{paper1}, we choose an initial TR radius of $\delta^0 = 0.1$,
a TR shrinking factor~$\beta_1=0.5$, an Armijo step-length $\kappa=0.5$,
a truncation of the TR boundary of $\beta_2 = 0.95$,
a tolerance for enlarging the TR radius of $\eta_\varrho = 0.75$,
a stopping tolerance for the TR sub-problems of $\tau_\text{{sub}} = 10^{-8}$,
a maximum number of TR iteration $K_{\text{max}} = 40$,
a maximum number of sub-problem iterations $K_{\text{{sub}}}= 400$,
a maximum number of Armijo iterations of $50$, and
a stopping tolerance for the FOC condition $\tau_\text{{FOC}}= 10^{-6}$.\\

\textbf{State-of-the-art methods.}
The following algorithms are used to compare to the literature.
\setlist[description]{font=\normalfont\bfseries\space,labelindent=10pt}
\begin{description}
	\item[1. FEM BFGS] Similar to the work in~\cite{QGVW2017,paper1}, we perform a standard projected BFGS method solely based on classical FEM evaluations. This means that the high-fidelity space $V_h$ is the only approximation space, and no reduced approach is used.
	\item[2. TR-RB BFGS from \cite{paper1}] Given the FEM discretization from Method 1, we use the trust-region reduced basis algorithm with full certification and global RB evaluations based on FEM enrichments (\cref{alg:basic_TRRB} with $\varepsilon^{(k)} \equiv 0$).
	As the reduced model, we choose the non-conforming dual (NCD)-corrected approach with Lagrangian enrichment and the respective error estimation.
	This reduction approach has shown very good robustness properties and fast convergence.
	Moreover, we use the projected BFGS as the ROM-based TR sub-problem.
\end{description}

\textbf{Selected methods introduced in this article.}
In this paper, we introduce a relaxation of the TR method. The relaxed version of Method~2 is obtained by choosing the relaxation sequence accordingly.
\begin{description}
	\item[2.r\hspace{7pt}R-TR-RB BFGS] In this method, we use the relaxed trust-region reduced basis variant for the FEM-based TR-RB algorithm from Method 2.
	The relaxation sequence is chosen as $\varepsilon^{(k)} := \mathbbm{1}_{(k < K)}(k) \cdot 10^{10-k}$ where $K := 10$; cf.~\cref{alg:basic_TRRB}.
	\item[2.r$\star$\hspace{2pt}R-TR-RB BFGS] Equivalent to Method 2.r, but without evaluating and assembling the estimator in early iterations~$k < 10$ (because the relaxation dominates the TR-conditions, i.e. $\Delta_{\Jhat^{(k)}_\red} \ll \varepsilon^{(k)}$).
	We use this variant to show that the error estimation is a substantial slow-down factor in the relaxed TR that is not needed in our experiments.
	The method can also be interpreted as Method 2.r with $\varepsilon^{(k)} := \mathbbm{1}_{(k < K)}(k) \cdot a_\text{max}$, where $a_\text{max}$ denotes the maximum positive value in the binary system.
\end{description}
Given the LOD discretization as explained in \cref{sec:tsrblod}, we consider the following localized methods.
\begin{description}
	\item[3. \hspace{6pt}PG--LOD BFGS] As a FEM replacement, we consider a new FOM method using the standard projected BFGS method with PG--LOD evaluations without using reduced models.
	The PG--LOD system is always constructed from scratch and does not use any prior knowledge from previous parameters.
	\item[4. \hspace{6pt}TR-TSRBLOD BFGS] We use the TR algorithm based on the localized TSRBLOD reduction process as detailed in \cref{sec:TRLRB_with_TSRBLOD}.
	The procedure is summarized in \cref{alg:R-TR-TSRBLOD}, choosing $\varepsilon^{(k)} \equiv 0$.
	The sub-problems are again solved with the BFGS method.
	Moroever, we use a local enrichment tolerance $\tau_{{\text{loc}}}=10^{-3}$ in \eqref{eq:optional_enrichment} which has proven to be sufficient for our experiment, cf.~\cref{sec:local_basis_enrichment}.
	\item[4.r\hspace{7pt}R-TR-TSRBLOD BFGS] Just as explained in Method~2.r, we devise the relaxed version of Method~4., by choosing $\varepsilon^{(k)} := \mathbbm{1}_{(k < K)}(k) \cdot 10^{10-k}$ where $K := 10$; cf.~\cref{alg:R-TR-TSRBLOD}.
	\item[4.r$\star$\hspace{2pt}R-TR-TSRBLOD BFGS] Equivalent to Method 4.r, but without evaluating and assembling the estimator if $k < 10$; cf.~Method 2.r$\star$.
	
\end{description}

\textbf{Complexity measures.}
To assess the presented methods w.r.t. their computational demands, we count the accumulated evaluations of the FOM and ROM systems that were needed until the respective algorithm is aborted.
To be precise, we deviate between the following complexities:
\begin{description}
	\item[FEM] FEM evaluations, proportional to the DoFs in $\mathcal{T}_h$, which are needed for approximating \eqref{P.state} or~\eqref{eq:dual_solution} with FEM or for enriching the respective global RB model for Methods 2.r and 2.
	\item[RB] Global RB evaluations for approximating the FEM system, proportional to the global basis size. %used in Methods 2.r and 2.
	\item[LOD coarse] Coarse PG--LOD system evaluations with exact corrector data, meaning to solve \eqref{eq:PG_LOD} or~\eqref{eq:PG_LOD_dual}, proportional to the DoFs in the coarse mesh $\mathcal{T}_H$.
	These are only required in Method 3 and for the FOM-based termination criterion in Methods 4.r and 4.
	\item[LOD local] Local evaluations of all FOM corrector problems that are required for assembling the multiscale stiffness matrix of \eqref{eq:PG_LOD} and \eqref{eq:PG_LOD_dual}, locally proportional to the fine DoFs in the coarse-scale patch~$U_\kl(T)$, indicated by the subscript $h$, i.e., $U_\kl(T)_h$.
	\item[RBLOD coarse] Coarse PG--LOD system evaluations with RB-based correctors for the multiscale stiffness matrix, required for the snapshots generation in the TSRBLOD, proportional to the DoFs in~$\mathcal{T}_H$.
	\item[RBLOD local] RB evaluations of the RB corrector problems, proportional to the local RB sizes.
	\item[TSRBLOD] RB evaluations of the TSRBLOD system, proportional to the two-scale RB size.
\end{description}

\textbf{Error measures.}
As the optimization target, we validate the methods by considering the relative error in the optimal value of $\Jhat$, i.e., we consider
$
e^{\Jhat,\text{rel}}(\mu) := |\Jhat(\mu^{\text{d}})  - \Jhat(\mu)| / \Jhat(\mu^{\text{d}}),
$
where~$\mu$ is the current iterate and~$\Jhat$ is either the FEM-based objective functional~$\Jhat_h$ or the LOD-based objective functional~$\Jhat^\textnormal{loc}_h$.

\subsection{Experiment 1: Moderately sized experiment for comparing with FEM-based methods}
\label{sec:exp_1}
In what follows, we consider an experiment where FEM solves are computationally affordable.
To this end, we set the resolution of the multiscale coefficients to $N_1 = 150$ and $N_2 = 300$.
For the fine mesh, we thus choose $n_h = 1200$ to ensure at least $4$ quadrilateral grid cells in each of the rapidly varying multiscale features.
Therefore, the FEM mesh has $1.4$ Mio degrees of freedom.
For the coarse grid, we choose $n_H=20$, which results in only $400$ coarse grid cells and, in particular, $\kl=3$ and $176.400$ fine-mesh elements for full patches~$U_\kl(T)$.
Concerning, the objective functional, we compute $u_{\mu^{\text{d}}}$ as the FEM solution of \eqref{P.state} for $\mu^{\text{d}}$.

\subsubsection{Estimator study for the two-scale reduced functional}
Before we elaborate on the optimization methods, we investigate the above-derived estimator $\Delta_{\Jhat_r^{\text{loc}}}$.
For this purpose, we employ a standard goal oriented greedy-search algorithm. To be precise, we consider a training set~$\Params_{\text{train}}$ containing $100$ randomly sampled parameters.
Subsequently, we enrich all local bases with respect the parameter with the largest estimated error.
In \cref{fig:estimator}, we illustrate the respective largest value of the estimator, compared to its true error and the resulting effectivity.
We conclude that the estimator suffers overestimation but the effectivity stays on a constant level.
This behavior has already been observed in the global RB case in \cite{paper1} and, as explained above, does not harm the method severely.

\begin{figure}[ht]
	\centering%
	\footnotesize%
	% This file was created by tikzplotlib v0.9.1.
\begin{tikzpicture}

\definecolor{color0}{rgb}{0.65,0,0.15}
\definecolor{color1}{rgb}{0.84,0.19,0.15}
\definecolor{color2}{rgb}{0.96,0.43,0.26}
\definecolor{color3}{rgb}{0.99,0.68,0.38}
\definecolor{color4}{rgb}{1,0.88,0.56}
\definecolor{color5}{rgb}{0.67,0.85,0.91}
\definecolor{color6}{rgb}{0.27,0.46,0.71}
\definecolor{color7}{rgb}{0.19,0.21,0.58}

\begin{axis}[
  name=top_left,
width=6.5cm,
height=4.35cm,
legend cell align={left},
legend style={fill opacity=0.8, draw opacity=1, text opacity=1, at={(1.2,0)}, anchor=south, draw=white!80!black},
log basis y={10},
ytick pos=right,
%xtick pos=bottom,
x grid style={white!69.0196078431373!black},
xlabel={greedy extension step},
xmajorgrids,
xmin=0, xmax=31,
xtick style={color=black},
y grid style={white!69.0196078431373!black},
ymajorgrids,
ymin=1e-04, ymax=1000,
ymode=log,
ytick style={color=black}
]
\addplot [semithick, color1, mark=*, mark size=2, mark options={solid, rotate=180, fill opacity=0.5}]
table {%
	% 0 0.21553202853293918
	1 0.009154464469405266
	% 2 0.009190800649467423
	3 0.0076231735426446345
%	4 0.007107296182982337
	5 0.005531703565792245
%	6 0.005530077813261425
	7 0.0033016131982213093
%	8 0.0032755910795887466
	9 0.002942029084725073
%	10 0.0019581167323239335
	11 0.0012396644590124684
%	12 0.0008420533791673179
	13 0.0006178145770969756
%	14 0.0006546741675685919
	15 0.0006554574114185918
%	16 0.0006281202650297857
	17 0.0006204424012627996
%	18 0.0006221943806041796
	19 0.00047737509422440816
%	20 0.00046053099322085345
	21 0.00045824887978662687
%	22 0.00044003552258531364
	23 0.0004367483127256655
%	24 0.0002480567888534235
	25 0.00024040758898258296
%	26 0.00023876379234799572
	27 0.00024071609187936716
%	28 0.0002407969586804004
	29 0.0002420100872533304
%	30 0.0002309560817810307
};
\addlegendentry{error J-corr}
\addplot [semithick, color2, mark=triangle*, mark size=2, dashed, mark options={solid, rotate=180, fill opacity=0.5}]
table {%
% 0 24.834311220853618
1 1.3706535601465069
%2 0.989870776410643
3 0.840500781493063
%4 0.765790807896836
5 0.6855725889998915
%6 0.6575249973664636
7 0.45380743828746894
%8 0.4171757289945165
9 0.3968604796776081
%10 0.2356599329216375
11 0.20352060647344047
%12 0.16267649700412687
13 0.06581885990666024
%14 0.06421418994160846
15 0.04938253426463875
%16 0.04166702272934916
17 0.03579677297622989
%18 0.03285950685758767
19 0.02520027058537395
%20 0.024953429780782015
21 0.023820216669079308
%22 0.02171023262079875
23 0.02051766163486703
%24 0.016885311880292703
25 0.016272972798919736
%26 0.015461394457360516
27 0.014422760833126552
%28 0.013702106982399988
29 0.012811786786027306
%30 0.011495276587315759
};
\addlegendentry{estimator J corr}
\addplot [semithick, color6, mark=pentagon*, mark size=2, mark options={solid, rotate=90, fill opacity=0.5}]
table {%
	%0 117.70525543478693
	1 181.14141579599112
%	2 149.77104191005628
	3 273.61591480565164
%	4 156.64280123528056
	5 150.72995184718195
%	6 124.13743430277296
	7 160.46482339539824
%	8 127.35891595079484
	9 258.935616974706
%	10 120.35029834097348
	11 458.41332709236843
%	12 251.33045733100025
	13 576.2499933945026
%	14 291.034508799907
	15 243.52992020086637
%	16 122.60807850651678
	17 146.314386299825
%	18 418.8596268267536
	19 542.3012844463465
%	20 167.43037456299712
	21 489.36279020858433
%	22 157.981462311781
	23 46.978227590211155
%	24 139.77352104392713
	25 118.3664011141037
%	26 1524.6207130713474
	27 192.70392714547754
%	28 423.311307512845
	29 71.14386118503143
%	30 49.772564977156215
};
\addlegendentry{estimator J eff}
\legend{};
\end{axis}

\begin{customlegend}[legend cell align={left}, legend style={fill opacity=0.8, draw opacity=1, text opacity=1,
    at=(top_left.north west),
    anchor=north east,
    xshift=-5pt,
    inner sep=5pt,
draw=white!80!black},
	legend entries={
    $|\Jhat_h^{\text{loc}} - \Jhat_\red^{\text{loc}}|$,
    $\Delta_{\Jhat_\red^{\text{loc}}}$, 
    $\Delta_{\Jhat_\red^{\text{loc}}}$ eff., 
  }]
\addlegendimage{semithick, color1, mark=*, mark size=2, mark options={solid, rotate=180, fill opacity=0.5}}
\addlegendimage{semithick, color2, mark=triangle*, mark size=2, dashed, mark options={solid, rotate=180, fill opacity=0.5}}
\addlegendimage{semithick, color6, mark=pentagon*, mark size=2, mark options={solid, rotate=90, fill opacity=0.5}}
\end{customlegend}

\end{tikzpicture}
	\caption{%
		Evolution of the true and estimated model reduction error in the reduced functional and its approximations and the effectivities during the greedy basis generation.
		Depicted is the $L^\infty(\Params_\textnormal{train})$-error, i.e.~$|\hat{J}_h - \Jnoncor_\red|$ corresponds to $\max_{\mu \in \Params_\textnormal{train}} |\hat{J}_h(\mu) - \Jnoncor_\red(\mu)|$, $\Delta_{\cJhatn}$ corresponds to $\max_{\mu \in \Params_\textnormal{train}}\Delta_{\cJhatn}(\mu)$, and "$\Delta_{\hat{J}_\red}$ eff." corresponds to $\max_{\mu \in \Params_\textnormal{train}} \Delta_{\hat{J}_\red}(\mu) \,/\, |\hat{J}_h(\mu) - \Jnoncor_\red(\mu)|$.
	}
	\label{fig:estimator}
\end{figure}
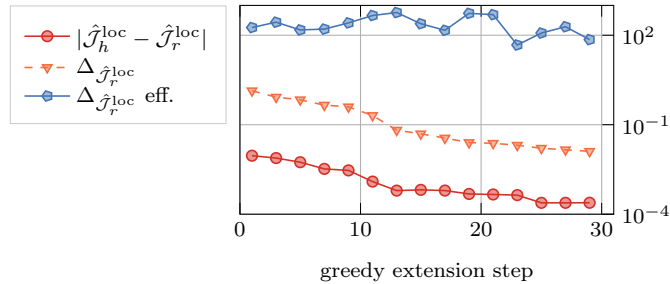

\subsubsection{Comparison of the Optimization methods}

\begin{figure}
	\centering
	\begin{subfigure}{.48\textwidth}
		\centering
		% include second image
		\includegraphics[width=.8\linewidth]{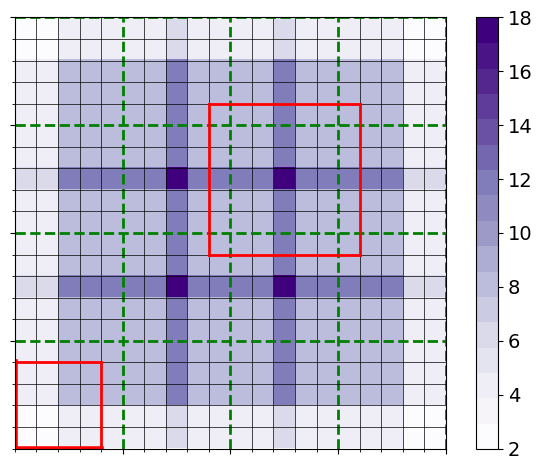}  
	\end{subfigure}
	\begin{subfigure}{.48\textwidth}
		\centering
		% include first image
		\includegraphics[width=.8\linewidth]{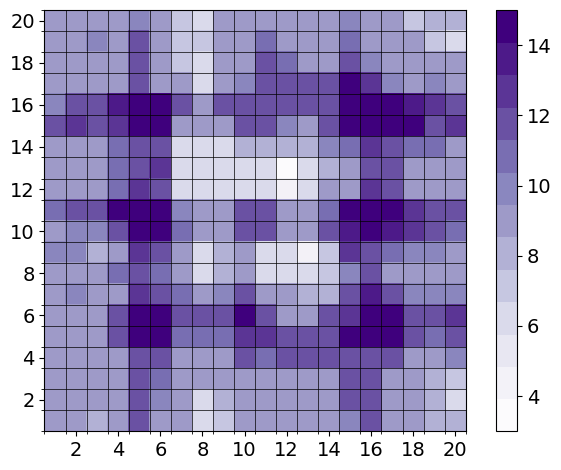}  
	\end{subfigure}
	\caption{Left: Number of affine coefficients in each patch problem $T$ for $n_H=20$ and $\kl=3$.
		The thermal block structure of $A^{1}_\xi$ and $A^{2}_\xi$ is highlighted in green, two patch instances are highlighted in red. Right: local corrector RB sizes of the Stage~1 models in Method 4.}
	\label{fig:aff_coefs}
\end{figure}

We emphasize that for this experiment,
an approximation error of the PG--LOD in $\Jhat$ is still observable, e.g., at the desired parameter $\mu^{\text{d}}$, we have
$
| \Jhat_h^{\text{loc}}(\mu^{\text{d}}) - \Jhat_h(\mu^{\text{d}})|/\Jhat_h(\mu^{\text{d}}) = 8.25 \cdot 10^{-6}.
$
Although this violates \cref{asmpt:truth}, we can expect that all methods converge up to the LOD discretization error, which is sufficiently close for this experiment.

In \cref{fig:aff_coefs}(left), we visualize the number of affine components of the local corrector models directly associated with the number of  components in $A_\mu$, which can be determined by the number of thermal blocks that lie in the patch.
The thermal blocks are highlighted in green, and since~$\kl=3$, the resulting affine components can be counted.
For instance, the lower-left element's patch only reaches the lower-left thermal block (resulting in $2$ affine components). Moreover, the elements in the interior have a patch that reaches up until all $9$ neighboring blocks (resulting in $18$ affine components each).
The discussed patches are highlighted in red in \cref{fig:aff_coefs}.
We conclude that the corrector problems have a more minor parameter dependence than globalized RB methods.
In turn, we can expect the local RB models to require fewer basis functions.

In \cref{fig:aff_coefs}(right), the local RB size of the Stage~1 models in Method 4 is depicted.
It can be seen that the model requires a relatively rich space at the coarse elements that are close to the "jumps" in the desired parameter, cf~\cref{fig:thermal_block}.
As expected, the low conductivity blocks in the middle of the domain do not require many RB enrichments since the optimization problem in these blocks is less demanding.
In addition, from solely looking at \cref{fig:aff_coefs}(left), one would guess that the local patch problems that admit the highest number of affine components require the most basis functions.
The fact that this expectation is invalid proves that the optional enrichment can play a significant role in the algorithm.

All compared methods indeed converged up to the chosen FOC-tolerance to the same point, and it was verified that the point is indeed a local optimum.
We intentionally stopped the FEM-based methods earlier (with $\tau_\text{{FOC}}^\text{{FEM}} = 10^{-4}$) to obtain a comparable optimization error $e^{\Jhat,\text{rel}}(\bar{\mu})$ of up to $10^{-6}$, which is due to the known LOD-error mentioned above.
In \cref{tab:TRRBLOD_1}, \cref{tab:TRRBLOD_2}, and \cref{fig:J_error}, we report relevant information on the evaluation counts, the iteration, and the observed run times.
We note again that the run times include all computational costs until convergence of the algorithm (including all offline expenses).

\begin{table}[h] \centering \footnotesize
	\begin{tabular}{|l|c|c|c|c|c|c|c|c|c|}
		\hline
		& \multicolumn{2}{c|}{} & \multicolumn{2}{c|}{LOD} 	& \multicolumn{2}{c|}{Stage 1} & TS & & \\ \hline
		Evaluations 		& FEM 		   & RB  & Coarse		& Local 	  & Coarse 	& Local & & Outer iter. & Time 
		\\ \hline  \hline
		Cost factor   & \#$\gridh$     & $N_{\text{RB}}$ & \#$\Gridh$ & \#$U(T_{H})_{h}$ &  \#$\Gridh$ & $N_{\text{RB}}$  & $N_{\text{RB}}$ & & \\ \hline
		1.\hphantom{a$\star$} FEM 	    		
		& {163} & -   & -   & - 	& -    	& -   	 & -  & 52 & 6412s 	    \\
		2.\hphantom{c$\star$} TR-RB   		
		& {10}  & 1067 & -   & - & -    	& -   	 & -  & 3  & \hphantom{0}863s \\
		2.r\hphantom{$\star$} R-TR-RB   		
		& {8}  & 1082 & -   & -  & -    	& -   	 & -  & 2  & \hphantom{0}565s	\\
		2.r$\star$ R-TR-RB   		
		& {8}  & 696 & -   & -  & -    	& -   	 & -  & 2  & \hphantom{0}289s	\\
		3.\hphantom{a$\star$} PG-LOD 			
		& -	   & -   & 242 & {128000}& -   	& -   	 & -  & 79 & \hphantom{0}723s \\
		4.\hphantom{c$\star$} TR-TS  		
		& -  & -   & 18   & {19200} & {42} 	& 67200  & {572}	& 6  & \hphantom{0}902s \\
		4.r\hphantom{$\star$} R-TR-TS 		
		& -  & -   & 12   & {12800}	& {20}  	& 32000  & {671}	& 4  & 		\hphantom{0}788s  \\
		4.r$\star$ R-TR-TS 		
		& -  & -   & 12   & {12800}	& {20}  	& 32000  & {311}	& 4  & 		\hphantom{0}281s  \\
		\hline
	\end{tabular}
	\caption{Experiment 1: Evaluations and timings of selected methods. The FEM-based methods are stopped such that $e^{\Jhat,\text{rel}}$ is comparable for all methods. Evaluation counts exclude estimator training.}
	\label{tab:TRRBLOD_1}
\end{table}

 \begin{table}[h] \centering \footnotesize
	\begin{tabular}{|l|c|c|c|c|c|c|c|c|} 
		\hline
		& 		& 		  & \multicolumn{2}{c|}{Online} 	& \multicolumn{3}{c|}{Offline} & \\ \hline
		Method 				  & Total 			  & Speedup & Outer & Inner 					
		& FEM & Stage 1	 &  Stage 2  & $e^{\Jhat_h,\text{rel}}$ 
		\\ \hline  \hline
		1.\hphantom{a$\star$} FEM 	  & 6412s 			  & -  &  6412s			& -  			  
		& - & - & - & 4.24e-06 	\\
		2.\hphantom{c$\star$} TR-RB 			  & \hphantom{0}862s & 7  &  \hphantom{0}841s	& \hphantom{0}22s
		& 841s & - & - & 7.29e-08 	\\
		2.r\hphantom{$\star$} R-TR-RB			  & \hphantom{0}565s & 11 &  \hphantom{0}542s	& \hphantom{0}23s
		& 542s & - & - & 2.45e-07 	\\
		2.r$\star$ R-TR-RB			  & \hphantom{0}290s & 22 &  \hphantom{0}268s	& \hphantom{0}22s
		& 268s & - & - & 2.45e-07 	\\
		3.\hphantom{a$\star$} PG-LOD & \hphantom{0}723s & 9 &  \hphantom{0}723s	& - 			 
		& - & - & - & 4.22e-06 	\\
		4.\hphantom{c$\star$} TR-TS			  & \hphantom{0}902s & 7 &  \hphantom{0}656s	& 246s 			  
		& - &456s & 128s & 4.22e-06 	\\
		4.r\hphantom{$\star$} R-TR-TS			  & \hphantom{0}789s & 8 &  \hphantom{0}499s	& 290s 			  
		& - &378s & \hphantom{0}72s & 4.22e-06 	\\
		4.r$\star$ R-TR-TS			  & \hphantom{0}282 & 23 &  \hphantom{0}276s	& \hphantom{00}5s 
		& - & 175s & \hphantom{0}61s & 4.22e-06 	\\
		\hline
	\end{tabular}
	\caption{Experiment 1: More details on the run times and accuracy of selected methods. The FEM-based methods are stopped such that the $e^{\Jhat,\text{rel}}(\bar{\mu})$ is comparable for all methods.}
	\label{tab:TRRBLOD_2}
\end{table}

\begin{figure}[h!]
	\centering\footnotesize
	\hspace{-0.6cm}
	% This file was created by tikzplotlib v0.9.1.
\begin{tikzpicture}

\definecolor{color0}{rgb}{0.65,0,0.15}
\definecolor{color1}{rgb}{0.84,0.19,0.15}
\definecolor{color2}{rgb}{0.96,0.43,0.26}
\definecolor{color3}{rgb}{0.99,0.68,0.38}
\definecolor{color4}{rgb}{1,0.88,0.56}
\definecolor{color5}{rgb}{0.67,0.85,0.91}

\begin{axis}[
  name=right,
legend cell align={left},
width=12cm,
height=6cm,
legend style={fill opacity=0.8, draw opacity=1, text opacity=1, at={(1.05,0.5)}, anchor=west, draw=white!80!black},
log basis y={10},
tick align=outside,
tick pos=left,
x grid style={white!69.0196078431373!black},
xlabel={time in seconds [s]},
xmajorgrids,
xmin=-10., xmax=2000,
xtick style={color=black},
y grid style={white!69.0196078431373!black},
ylabel={\(\displaystyle e^{\Jhat_h,\text{rel}}\)},
ymajorgrids,
ymin=1e-08, ymax=0.1,
ymode=log,
ytick style={color=black}
]
\addplot [semithick, color0, mark=triangle*, mark size=3, mark options={solid, fill opacity=0.5}]
table {%
	200.070465087891 0.0507441622863822
	319.514115810394 0.0206452854274428
	439.002683401108 0.0122548783687695
	558.415676355362 0.00881934203447243
	677.864444732666 0.00843580399321531
	797.298424243927 0.00834282463100888
	916.767745018005 0.00822601478720175
	1036.29923462868 0.00806585705286644
	1155.8025124073 0.00783596459484093
	1275.26995038986 0.00764890826114861
	1394.75429296494 0.00757146507234197
	1514.19849824905 0.00750989253031875
	1633.69614315033 0.00740877961955366
	1753.17409420013 0.00715054203500731
	1872.60409832001 0.00660900373173123
	1992.02441740036 0.00513252875110171
	2111.50488805771 0.0017039027253436
	2230.9416179657 0.000834559352331121
	2350.4620912075 0.00055390822954271
	2469.95707607269 0.000515361491122412
	2589.48239445686 0.000480023309817446
	2708.90225768089 0.000412647088049667
	2828.25652432442 0.000331802692126626
	2947.71574950218 0.000313513195936865
	3067.23507118225 0.000306179854902666
	3186.76052379608 0.000302984719787069
	3306.35480165482 0.000299088196630493
	3426.02895641327 0.000292383612388925
	3545.5935344696 0.000283129261227977
	3665.14190077782 0.000269771735050162
	3784.65148234367 0.000241344531423993
	3904.1110496521 0.00012132993506242
	4023.62576079369 8.55881203949416e-05
	4143.0657954216 6.33011351940382e-05
	4262.40058970451 4.74035410527041e-05
	4381.74972939491 3.29545392687702e-05
	4501.13214826584 2.21987431867632e-05
	4620.54872393608 1.88058961714255e-05
	4739.99843406677 1.57931258744082e-05
	4859.50440311432 1.14829687869467e-05
	4978.91410398483 9.44278331882487e-06
	5098.30205607414 7.29588082082344e-06
	5217.59353542328 7.1334375806309e-06
	5337.03256177902 6.85962346436142e-06
	5456.45532035828 6.57101363743529e-06
	5575.91868066788 6.12651232168027e-06
	5695.32953858376 5.79558165947347e-06
	5814.73174095154 5.58427174746079e-06
	5934.09788155556 5.44985821426813e-06
	6053.51430130005 5.29632589296725e-06
	6172.91858363152 5.15706750570111e-06
	6292.31639909744 4.820948501294e-06
	6411.70653796196 4.23805708305025e-06
};
\addlegendentry{1.\hphantom{r$\star$} FOM FEM}
\addplot [semithick, color2, mark=diamond*, mark size=3, mark options={solid, fill opacity=0.5}]
table {%
	169.244106481783 0.0507441622863822
	344.878037641756 0.00376830011611018
	593.232571790926 5.17272768547627e-05
	862.681221912615 7.28546343253811e-08
};
\addlegendentry{2.\hphantom{r$\star$} TR-RB}
\addplot [semithick, color1, mark=asterisk, mark size=3, mark options={solid,rotate=180, fill opacity=0.5}]
table {%
	83.9604925354943 0.0507441622863822
	182.33541767206 0.00192349081821419
	289.984329386614 2.45332196557868e-07
};
\addlegendentry{2.r$\star$ R-$\star$-TR-RB}
\addplot [semithick, color5, mark=triangle*, mark size=3, mark options={solid, fill opacity=0.5}]
table {%
	8.36511540412903 0.0507441622863822
	16.9066307544708 0.0209219309572612
	26.1908042430878 0.0124035166195799
	35.0032784938812 0.0088908691043017
	43.6283776760101 0.0084948168428487
	52.6025915145874 0.00839890043853209
	61.1713907718658 0.00828744263310321
	69.9955928325653 0.008142682584523
	78.7149248123169 0.00792265267685432
	87.5153856277466 0.00773221493102105
	96.8462755680084 0.00764925600733113
	105.764570951462 0.00758739803358344
	115.326550006866 0.00749145629760894
	124.497675657272 0.00724390700242972
	133.284508943558 0.00673506520897571
	142.172802448273 0.00546705271709724
	150.972652435303 0.00177307422590722
	160.169446229935 0.000878273478293501
	168.507457017899 0.000594669657495217
	177.501410245895 0.000545449098740658
	186.040268659592 0.000499567165252124
	194.682205915451 0.000418439561376882
	203.370742559433 0.000334000869457451
	212.157036304474 0.000321128354331623
	220.845720767975 0.000312748661498663
	229.76584482193 0.000309253913873375
	238.486796617508 0.000296773702099129
	247.670082092285 0.000280736535533732
	256.582405805588 0.000252006235402336
	264.886821746826 0.000221640781500554
	273.337299108505 0.000191252830927358
	282.136838674545 0.000146487560722663
	291.709166765213 0.000121209982339998
	300.475846529007 0.000106766253267887
	310.368748903275 8.90778661957192e-05
	319.668181657791 7.14050084105189e-05
	328.630468130112 5.33272487641678e-05
	337.131402492523 4.14556047585446e-05
	347.184409379959 3.60349223136325e-05
	356.040106296539 3.28281483947812e-05
	364.796132564545 2.64248834642711e-05
	373.928318738937 1.9550416066938e-05
	383.249375104904 1.37154721091548e-05
	391.947289943695 1.11278334731235e-05
	401.387892723083 1.03071188006787e-05
	411.080298900604 1.00055220781936e-05
	419.910415649414 9.77069973484568e-06
	428.981688737869 9.43300717937134e-06
	438.35834479332 8.74485100754718e-06
	447.360706329346 7.64349348525428e-06
	457.317383527756 6.27347232007303e-06
	466.316675901413 5.06978976755335e-06
	475.049007177353 4.87413750027699e-06
	483.79289150238 4.7900847979232e-06
	492.212411165237 4.71377772504056e-06
	501.443563938141 4.67002575965658e-06
	510.568825483322 4.66023049661146e-06
	519.054765224457 4.65553105999383e-06
	528.241171360016 4.6431838203187e-06
	536.89192700386 4.61694896092268e-06
	546.548330783844 4.56804921755705e-06
	554.862567424774 4.51034417414675e-06
	563.634038686752 4.47279533077882e-06
	572.71826004982 4.45679167215829e-06
	581.681055307388 4.44397494181459e-06
	591.303342580795 4.41847566090559e-06
	600.803759813309 4.37306434664109e-06
	609.934619903564 4.31747473950139e-06
	619.172713279724 4.28525687046211e-06
	628.538497924805 4.27833537597344e-06
	637.759961366653 4.27771610489458e-06
	647.418738126755 4.27748495690494e-06
	656.542580842972 4.27662146518237e-06
	666.285809755325 4.27473444686299e-06
	675.468652963638 4.26988039858855e-06
	684.309336662292 4.25965509109894e-06
	694.790027618408 4.24265167398019e-06
	704.296395778656 4.22626406981763e-06
	713.701845407486 4.21943978334838e-06
	722.969966173172 4.21846934051118e-06
};
\addlegendentry{3.\hphantom{r$\star$} FOM LOD}
\addplot [semithick, color3, mark=diamond*, mark size=3, mark options={solid, fill opacity=0.5}]
table {%
	96.6443058233708 0.0507441622863822
	216.95975577645 0.0325917495587029
	347.332955954596 0.00588037324303126
	567.091648219153 1.17817949600862e-05
	754.58963858895 4.33649335973563e-06
	864.28984558396 4.21903395486645e-06
	901.763541577384 4.21825361662798e-06
};
\addlegendentry{4.\hphantom{r$\star$} TR-TSRBLOD}
\addplot [semithick, black, mark=asterisk, mark size=3, mark options={solid, fill opacity=0.5}]
table {%
	56.700649747625 0.0507441622863822
	116.08456541039 0.00861268259446701
	195.563434610143 5.618549788311e-06
	271.66650606133 4.23929878623497e-06
	281.519287595525 4.21827631891247e-06
};
\addlegendentry{4.r$\star$ R-$\star$-TR-TSRBLOD}
\addplot [semithick, black, dashed, mark size=0, mark options={solid, fill opacity=0.5}]
table {%
	-20 4.21825361662798e-06
	2100 4.21825361662798e-06
};
\addlegendentry{LOD discretization error}
\end{axis}
\end{tikzpicture} 	
	\caption{Experiment 1: Error decay and performance of selected algorithms. Method 1 converged after 6412s.
	}
	\label{fig:J_error}
\end{figure}
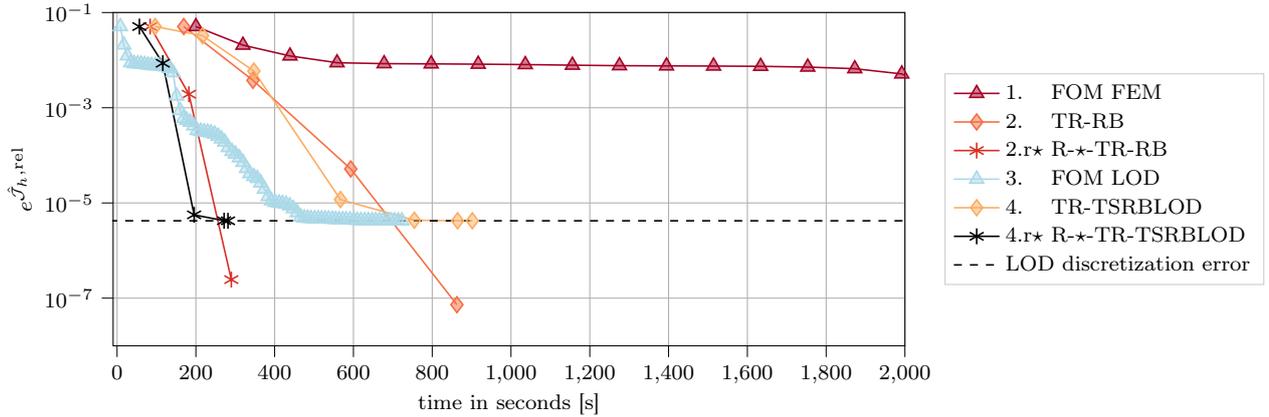

We conclude that all Methods~2-4 give a significant speedup to the standard FEM Method~1.
Although~$52$ iterations of Method~1 and the corresponding $163$ FEM evaluations are relatively few for a $32$-dimensional optimization problem, the method suffers from the computational cost for performing FEM solutions with $1.4$ Mio. DoFs.
As already shown in \cite{paper1}, the TR-RB Method~2 is mainly designed to avoid these expensive FEM evaluations.
The (relaxed) TR-RB methods converge already after few outer iterations, which only requires~$10$ and $8$ FEM-based enrichments of the reduced spaces.
On the other hand, the inner RB evaluations are cheap.

As expected, the localized methods only converge until the priorly known approximation error of the PG--LOD is reached.
However, it can be seen that the TR-TSRBLOD methods find the same point and are not subject to approximation issues.

A significant reason why the TSRBLOD method is particularly suitable for this work is its very efficient online phase.
This result can be observed in \cref{tab:TRRBLOD_2}, where extended timings are given for the TSRBLOD method.
Just as the TR-RB methods, only a few seconds are required to solve the sub-problems in the 4.r$\star$ variant which is independent of the coarse LOD mesh.
However, the sub-problem is more demanding for the variant where the estimator is used, which is due to the fact that we did not afford the offline time to prepare for the two-scale error estimator in Stage~2.
We further notice that the localized methods show a comparably good convergence speed w.r.t. the FEM-based methods, although FEM is still comparably fast.
We also see that Method~3 (localized FOM) shows a strong convergence speed.
This is due to the relatively small patch problems such that the localized corrector problems and the corresponding Stage~2 reduction do not pay off immensely.
We also emphasize that Method~3 and 4 immensely benefit from the parallelization.

The relaxed versions of the TR methods show the fastest convergence behavior in this experiment.
The fully enforced certification in the non-relaxed TR-RB and TR-TSRBLOD can not detect the full benefit from their respective surrogate model and, instead, truncate the sub-problems too early.
Comparing the R-TR and the R-$\star$-RB variants, we observe that our choice of the relaxation parameters, the relaxed TR methods unconditionally trust the used surrogate models.
The $\star$-variants, where estimation is completely left out for early iterations, show that including the estimation does not change the result but only increases the computational time due to the pre-assembly preparation and evaluation of the estimates.

In conclusion, FEM based-methods can reliably be replaced by localized methods already for moderately small fine-mesh sizes.
The accuracy of the localized method can be expected up to the LOD-discretization error (cf.~the discussion above regarding \cref{asmpt:truth}).
The full benefit of the TR-TSRBLOD approaches in comparison to the localized FOM can only be deduced for scenarios where the PG--LOD is costly in itself.
Thus, in the second experiment, we increase the complexity of the multiscale structure of the problem.

\subsection{Experiment 2: Large scale example}

We consider a large scale example where the global FEM mesh does not fit into the machine's memory.
We set the multiscale resolution to $N_1 = 1.000$ and $N_2 = 250$.
For the fine mesh, we again aim for at least~$4$ fine mesh entities in each multiscale cell and hence choose $n_h = 4000$.
Therefore, the FEM mesh would have~$16$ Mio degrees of freedom, which we consider prohibitively large.
Thus, we do not utilize FEM-based methods and only compare Methods 3 and 4, where we only use Method 4.r$\star$ as relaxed variant since it has proven advantageous in the former experiment.
For the coarse-grid, we choose $n_H=40$, which results in $1600$ coarse grid cells and, in particular, $\kl=4$ and $810.000$ fine-mesh elements for full patches $U_\kl(T)$.
Since FEM evaluations are not available, the desired solution $u_{\mu^{\text{d}}}$ is computed with the PG--LOD, i.e. we solve~\eqref{eq:PG_LOD} for~$\mu^{\text{d}}$.

Similar to the above illustrations, in \cref{fig:aff_coefs_2}, we report the respective number of affine components of the patch problems as well as the final local RB sizes of the certified TR-TSRBLOD method with optional enrichment (Method 4).
In particular, \cref{fig:aff_coefs_2} can be interpreted as the refined version of \cref{fig:aff_coefs}, where it is even more visible that the local corrector problems have fewer affine components and require more basis functions for the corrector problems that are largely affected by the "jumps" in the desired thermal block state, depicted in \cref{fig:thermal_block}.
Just as before, it can be seen that the amount of basis functions is also associated with the intensity of the respective "jumps", and the low conductivity in the middle of the domain is well visible.

In \cref{tab:TRRBLOD_3} and \cref{tab:TRRBLOD_4}, we again provide an extensive comparison concerning evaluations, run time, and iteration counts of the methods.
It can be seen that the TSRBLOD-based methods successfully reduce the computational effort of Method 3, which is mainly due to the increasing number of fine-mesh DoFs in the patches.
With increasing complexity of the multiscale problem, we thus expect even more speedups.
We also note that the speedup w.r.t. the FOM method is also dependent on the outer iteration counts, cf. \cite{paper1,paper2}.
It can be expected that the benefit of reduced models is even more present for increasing complexity of the optimization problem.

\begin{figure}
	\centering
	\begin{subfigure}{.48\textwidth}
		\centering
		% include second image
		\includegraphics[width=.8\linewidth]{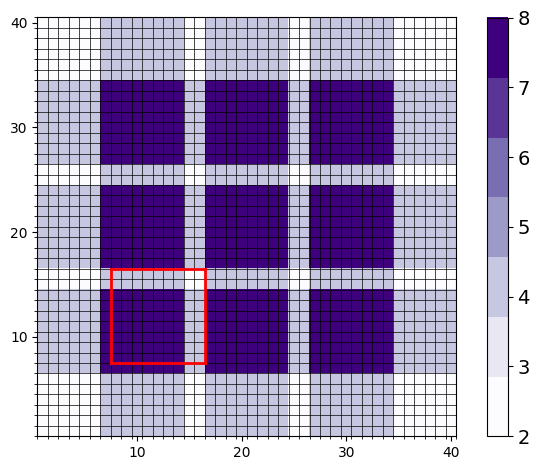}  
	\end{subfigure}
	\begin{subfigure}{.48\textwidth}
		\centering
		% include first image
		\includegraphics[width=.8\linewidth]{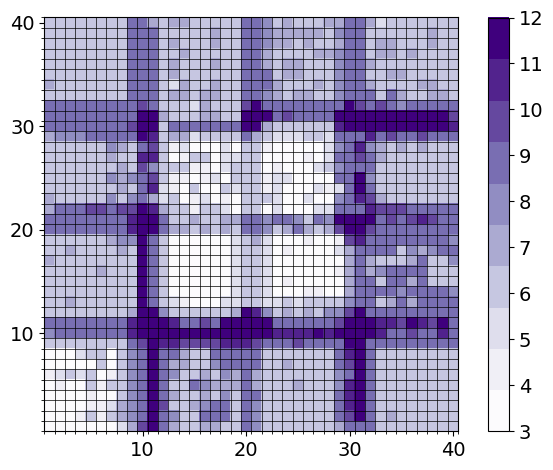}  
	\end{subfigure}
	\caption{Left: Number of affine coefficients in the patch problems for $n_H=40$ and $\kl=4$.
		One patch is highlighted in red. Right: local RB sizes of the Stage~1 models in Method~4.}
	\label{fig:aff_coefs_2}
\end{figure}

 \begin{table}[h!] \centering \footnotesize
 	\begin{tabular}{|l|c|c|c|c|c|c|c|c|c|} %c|
 		\hline
 		& \multicolumn{1}{c|}{} & \multicolumn{2}{c|}{LOD} 	& \multicolumn{2}{c|}{Stage 1} & TS &
 		%		\multicolumn{4}{c|}{} \\ \hline
 		\multicolumn{3}{c|}{} \\ \hline
 		Evaluations 		& FEM 		   & Coarse		& Local 	  & Coarse 	& Local & & Outer iter. & Time %& FOC
 		& $e^{\Jhat_h^\textnormal{loc},\text{rel}}$
 		\\ \hline  \hline
 		Cost factor   & \#$\gridh$     & \#$\Gridh$ & \#$U(T_{H})_{h}$ &  \#$\Gridh$ & $N_{\text{RB}}$  & $N_{\text{RB}}$ &
 		%		\multicolumn{4}{c|}{} \\ \hline
 		\multicolumn{3}{c|}{} \\ \hline
 		3.\hphantom{a$\star$} PG-LOD
 		& -  		     & 307 & {665600}& -   	& -   	 & -  & 100 & 11317s
 		%		& 7.64e-07
 		& 2.07e-10
 		\\
 		4.\hphantom{c$\star$} TR-TS
 		& -  		     & 10   & {32000}	& {20}  	& 128000  & {938} & 5  & \hphantom{0}5393s
 		%		& 8.07e-07
 		& 1.57e-10
 		\\
 		4.r$\star$ R-TR-TS
 		& -  		     & 8   & {51200}	& {30}  	& 128000  & {422} & 4  & \hphantom{0}2998s
 		%		& 8.18e-07
 		& 1.32e-11
 		\\
 		\hline
 	\end{tabular}
  	\caption{Experiment 2: Evaluations and accuracy of selected methods. Evaluation counts exclude estimator training.}
 	\label{tab:TRRBLOD_3}
 \end{table}

 \begin{table}[h!] \centering \footnotesize
	\begin{tabular}{|l|c|c|c|c|c|c|c|} 
		\hline
		& 		& 		  & \multicolumn{2}{c|}{Online} 	& \multicolumn{3}{c|}{Offline}  \\ \hline
		Method 				  & Total 			  & Speedup & Outer & Inner 					
		& FEM & Stage 1	 &  Stage 2  %& $e^{\Jhat,\text{rel}}$ 
		\\ \hline  \hline
		3.\hphantom{a$\star$} PG-LOD & 11317s & - &  11317s	& - 			 
		& - & - & - %& 4.22e-06 	
		\\
		4.\hphantom{c$\star$} TR-TS & \hphantom{0}5393s & 2 &  \hphantom{000}486s	& 360s  & - & 4042s & 505s %& 4.22e-06 
		\\
		4.r$\star$ R-TR-TS			 & \hphantom{0}2998s & 4 &  \hphantom{000}5s	& \hphantom{00}5s & - & 2690s & 336s% & 4.22e-06 
		\\
		\hline
	\end{tabular}
	\caption{Experiment 2: More details on the run times of selected methods.}
	\label{tab:TRRBLOD_4}
\end{table}

\section{Concluding remarks and future work}

In this article, we presented a first combination of localized reduced basis methods for efficiently solving parameterized multiscale problems with optimization methods that adaptively construct such localized reduced models in the context of an iterative error-aware trust-region algorithm for accelerating PDE-constrained optimization.
Moreover, we have formulated a relaxed version of the TR algorithm from \cite{paper1} to neglect the strong certification in the first iterations.
For this relaxation, the same convergence result holds.

Subsequently, we have discretized the optimality system of the PDE-constrained optimization problem~\eqref{P} with a localized ansatz based on the Petrov-Galerkin version of the localized orthogonal decomposition method.
For an online efficient reduced model with optional local basis enrichment, such that the sub-problems of the TR algorithm can be solved fast, we have used the TSRBLOD based on a two-scale RB ansatz of the LOD.

The TR-TSRRBLOD method has proven advantageous both in terms of computational effort and adaptivity concerning the localized RB models.
In the experiments, we showed that localized RB approaches can efficiently replace FEM-based techniques, especially for growing complexity of the multiscale system.

Many tasks have been left for the future.
Although the underlying multiscale data is already highly heterogeneous and non-periodic, and the LOD approach showed good approximation properties w.r.t. FEM, it is commonly known that the LOD struggles, e.g., for high-contrast problems or complex coarse data such as thin channels.
For using the TR-TSRBLOD, it has to be verified priorly that \cref{asmpt:truth} is given up to an acceptable tolerance.
To remedy this, the discussed concepts can be generalized to other multiscale methods, always dependent on the respective multiscale task.
It is also desirable to derive a posteriori error theory for the LOD such that the homogenization term from \eqref{eq:hom_red_term} can be used to validate the approximation quality of the LOD, cf.~\cref{rem:trunc-red-hom-term}.
We also mention that, in this work, we have enforced several problem assumptions, e.g., ellipticity, symmetry, and homogeneous boundary conditions, to simplify the presentation.
However, it seems straightforward to generalize the methodology to more challenging problem classes.

Concerning the specific instance of the TR-TSRBLOD, the numerical experiments already showed an overall speedup w.r.t. FEM and the PG--LOD, though the first experiment was relatively small.
Our theoretical findings suggest even better run times within a more HPC-oriented implementation.
Moreover, an intermediate preparatory reduction of the two-scale system can be used to decrease further offline expenses of Stage~2.
The described TR-TSRBLOD method can also be enhanced in terms of the choice of the local enrichment tolerance $\tau_{{\text{loc}}}$, such that an appropriate choice for the respective optimization problem or model can efficiently be found with adaptive refinements, cf.~\cref{sec:local_basis_enrichment}.

In our numerical experiments, we observed that the relaxed variant does not use the estimator (cf.~R-$\star$-RB variant vs. R-RB variant in \cref{sec:exp_1}).
However, ignoring the estimator for all relaxed iterations can not be considered valid in general.
In a more general context, the outer iterations where the estimator can be ignored can be found prior to the algorithm by evaluating the estimator for an empty basis at random parameter samples and computing the maximum value.

\section*{Code availability}
All experiments have been implemented in \texttt{Python} using \texttt{gridlod} \cite{gridlod} for the PG--LOD discretization and \texttt{PyMOR} \cite{pymor} for the model order reduction.
In particular, the software is internally based on the software that has been used in \cite{paper1} and \cite{tsrblod}.
The complete source code for all experiments, including setup instructions, can be found in~\cite{CodeTRLRB}, also available under \url{https://github.com/TiKeil/Trust-region-TSRBLOD-code}.

{\small
%\bibliographystyle{abbrv}
%\bibliography{references}

}

\end{document}